%

\documentclass[[sn-mathphys,Numbered]{svjour3}		

\usepackage{epsfig}


\graphicspath{{fig/}}

\usepackage{tikz}
\usepackage{geometry}
\usetikzlibrary{matrix,positioning,decorations.pathreplacing}

\makeatletter
\let\old@ssect\@ssect 
\makeatother

\usepackage{hyperref}

\makeatletter
\def\@ssect#1#2#3#4#5#6{%
  \NR@gettitle{#6}
  \old@ssect{#1}{#2}{#3}{#4}{#5}{#6}
}
\makeatother


\usepackage{amssymb,amsfonts,amsmath}%

\usepackage{enumitem}

\usepackage{color}
\usepackage{url}

\renewenvironment{proof}{{\bf Proof.}}{\hfill \hspace*{1pt}\hfill $\Box$}

\newcommand{\calK}{\mathcal{K}}

\newcommand{\Rpn}{\R_{0}^{+}}
\newcommand{\R}{\mathbb{R}}
\newcommand{\C}{\mathbb{C}}
\newcommand{\Z}{\mathbb{Z}}

\newcommand{\lel}{\left\langle}
\newcommand{\rir}{\right\rangle}
\newcommand{\scalp}[2]{ \lel #1, #2 \rir }

\newcommand{\esssup}{\operatorname*{ess\;sup}}%
\newcommand{\tm}{\times}%
\newcommand{\N}{\mathbb{N}}%

\newcommand{\amc}[1]{{\color{black} #1}}

\newcommand{\fsc}[1]{{\color{black} #1}}

\newcommand \Iff   {\Leftrightarrow}

\begin{document}

\title{Quadratic ISS Lyapunov functions for linear analytic systems}

\titlerunning{ Quadratic converse ISS Lyapunov theorems}


%
 %
%

\author{Andrii~Mironchenko* \and Felix L.~Schwenninger} 

\institute{A. Mironchenko is with Department of Mathematics, University of Bayreuth, Germany. Corresponding author. \email andrii.mironchenko@uni-bayreuth.de
\\
{F. Schwenninger is with Department of Applied Mathematics, University of Twente, P.O.~Box 217, 7500AE Enschede, The Netherlands.}
\email{f.l.schwenninger@utwente.nl}}

\maketitle

\begin{abstract}
\amc{
We derive converse Lyapunov theorems for input-to-state stability (ISS) of linear 
infinite-dimensional analytic systems. While we show that ISS in general does not imply the
existence of a coercive quadratic ISS Lyapunov function, even if the input operator is bounded, we prove that indeed  quadratic ISS Lyapunov functions always exist for $p$-admissible input operators with $p<2$, provided the semigroup is similar to a contraction on a Hilbert space.
The constructions are semi-explicit and rely on classical results on analytic semigroups and similarity to contractive ones. In the case of self-adjoint generators, they coincide with the canonical Lyapunov function being the norm squared. 
}
\end{abstract}

\keywords{
infinite-dimensional systems, linear systems, nonlinear systems,
input-to-state stability, Lyapunov methods
}
\subclass{37B25, 37C75, 93C25, 93D09}
\maketitle
%
%
%
%

\section{Introduction}

Input-to-state stability (ISS) was introduced by Sontag in his celebrated paper \cite{Son89} and has rapidly become a backbone of robust nonlinear control theory with diverse applications to robust stabilization \cite{FrK08}, nonlinear observer design \cite{AnP09}, analysis of large-scale networks \cite{JTP94,DRW10}, event-based control \cite{Tab07}, networked control systems \cite{NeT04}, ISS feedback redesign \cite{Son89}, quantized control \cite{LiH05}, nonlinear detectability \cite{KSW01}, etc. 
We refer to a recent monograph \cite{Mir23} for a detailed treatment of the classical ISS theory and control applications.
In the past decade, the ISS concept has been extended to broad classes of infinite-dimensional systems, including partial differential equations (PDEs) with distributed and boundary controls, semilinear evolution equations in abstract spaces, time-delay systems, etc. \cite{MiW18b,TPT18,JLR08,KaK16b,JNP18,JSZ17,ZhZ18,ZhZ20,KaK19}.
We refer to \cite{MiP20} for a survey of the state of the art of infinite-dimensional ISS theory and its applications to robust control and observation of distributed parameter systems, as well as to \cite{Sch20} for an overview of available results on ISS of linear boundary control systems and some semilinear extensions.

One of the most fundamental concepts of the ISS theory is the notion of an \emph{ISS Lyapunov function}. 
For ODE systems with Lipschitz continuous right-hand side, it is known that the existence of a coercive ISS Lyapunov function is equivalent to ISS \cite{SoW95}, which was extended to classes of semilinear evolution equations with Lipschitz nonlinearities and \emph{distributed} inputs in \cite{MiW17c}.
However, the application of Lyapunov methods for ISS analysis of boundary control systems leads to challenging theoretical problems.

For example, it is well-known that the classic heat equation with Dirichlet boundary inputs is ISS. 
However, no coercive ISS Lyapunov function is known for this system, and the existence of such a function is neither proved nor disproved. This challenge led to the thriving of non-Lyapunov methods for ISS analysis: admissibility theory \cite{JNP18} and spectral analysis \cite{KaK16b} have been successfully used for linear systems; a monotonicity approach \cite{MKK19} can be applied to monotone control systems such as nonlinear parabolic systems with distributed and boundary inputs (via maximum principle); De-Giorgi iteration \cite{ZhZ19} has been applied for (local) ISS analysis of Burgers' equation.
 Despite the efficiency of these methods within particular system classes, we still lack the generality of the Lyapunov method.

To address the problem with the applicability of Lyapunov tools, in \cite{MiW18b,MiW19a} the concept of a non-coercive Lyapunov function --- i.e.\ a Lyapunov function that is not necessarily coercive --- has been proposed, and in \cite{JMP20} it was shown that the existence of a non-coercive ISS Lyapunov function implies ISS, provided some additional properties of the system are satisfied. In particular, in \cite{JMP20}, a quadratic, non-coercive ISS Lyapunov function was constructed for the 1-D heat equation with a Dirichlet boundary input. 
Yet, it remains an open problem whether a coercive ISS Lyapunov function for such a simple boundary control system exists, which puts into question the applicability of Lyapunov methods for ISS analysis of boundary control systems. 
The problem becomes even more intriguing, as for linear parabolic systems with Neumann and Robin boundary inputs and for linear first-order hyperbolic systems (systems of conservation laws), rather simple coercive quadratic ISS Lyapunov functions exist \cite{TPT18,ZhZ18}. 

In \cite{Mar26}, a construction of coercive Lyapunov functions was proposed for systems that are regular, possess a non-coercive Lyapunov function, and for which there exists an exactly observable output. However, parabolic systems with Dirichlet inputs do not belong to this class. 
Another relaxation of the Lyapunov function concept, called generalized ISS Lyapunov function, has been proposed in \cite{ZhZ25}.

All this makes developing systematic Lyapunov methods for analysis of linear and nonlinear boundary control systems a central problem in the infinite-dimensional ISS theory.

\textbf{Contribution.}
\emph{In this paper, we give necessary and sufficient conditions for the existence of quadratic ISS Lyapunov functions for linear systems with bounded input operators, and prove the first converse ISS Lyapunov theorems for linear analytic systems with unbounded input operators. \fsc{The candidates for Lyapunov functions arise from rephrasing classical results on analytic semigroups, and when they are similar to contractive ones. While the related techniques are well-known and already present in infinite-dimensional systems theory, our contributions lie in the theory of ISS Lyapunov functions.   
For instance, while the coercivity of our Lyapunov functions is well-known from relations to the holomorphic functional calculus, it was not previously known for which control operators these  become ISS Lyapunov functions.}}
In \cite[Theorem 8]{MiW17c}, it was shown that a linear system on a general Banach space with distributed controls is ISS if and only if there exists a coercive ISS Lyapunov function, which is an equivalent norm on $X$. By taking squares, we can always find a 2-homogeneous Lyapunov function, which is, however, not quadratic in general, even if $X$ is a Hilbert space. Moreover, \cite[Theorem 8]{MiW17c} states that on Hilbert spaces, such a system is ISS if and only if there exists a non-coercive, quadratic ISS Lyapunov function. \emph{In Section~\ref{sec:Coercive quadratic Lyapunov functions for linear systems with bounded input operators}, we show that one cannot expect to have a coercive quadratic Lyapunov function even if additionally the system is supposed to be analytic. However, we also show that a coercive, quadratic Lyapunov function exists if the underlying semigroup is similar to a contraction semigroup on Hilbert space.}
\fsc{The latter property is well-studied in abstract Hilbert space semigroup theory and can, e.g.\ characterized by Callier--Grabowski--Le~Merdy theorem, see e.g.\ \cite[Theorem 7.3.7]{Haase06}. In well-behaved examples, this property is often trivially given as the semigroup is even contractive (with respect to a suitably chosen Hilbert space norm), which typically arises from modelling energy balances in physical systems and is usually checked implicitly when well-posedness is argued (Lumer-Phillips theorem). Moreover, for self-adjoint and normal semigroup generators, the contraction property is trivially satisfied. Yet, the question whether a given semigroup is similar to a (Hilbert space) contractive one is, in its general form, not at all trivial, see e.g. \cite{OlT25,OlT25b} for a recent in-depth analysis. In this work, this property will be a structural assumption for some results. Most of our results relate to analytic semigroups on Hilbert spaces relying on classical results relating the boundedness of the holomorphic functional calculus for sectorial operators and similarity to contractions \cite{Haase06}.
}

If $B$ is an unbounded operator, it is well-known that $L^2$-ISS for a linear system is equivalent to $2$-admissibility of $B$ together with exponential stability of the underlying semigroup \cite[Proposition 2.10]{JNP18}. 
In Section~\ref{sec:Non-coercive quadratic ISS Lyapunov functions for analytic systems}, we prove for  semigroups which are analytic and similar to a contraction semigroup that the condition $B \in L(U,X_{-p})$, $p\in (0,\frac{1}{2})$ (which is stronger than 2-admissibility) implies the existence of a coercive $L^2$-ISS Lyapunov function. If $A$ is additionally self-adjoint, then a weaker condition $B \in L(U,X_{-\frac{1}{2}})$  (still slightly stronger than 2-admissibility) already suffices for the same claim.

Section~\ref{sec:Non-coercive quadratic ISS Lyapunov functions for analytic systems} further demonstrates that non-coercive ISS Lyapunov functions can be constructed for linear systems under much less restrictive assumptions. 
\fsc{We further argue the applicability of our abstract results by two examples in Section 5.}
Our findings are summarized in Figure~\ref{fig:Overview-of-results}. 

Our results can be understood as a part of the effort on the development of the methods to analyze ISS of nonlinear PDEs with boundary inputs, or more generally, of nonlinear boundary control systems. 
As a rule, Lyapunov methods seem to be the most realistic way to study stability and ISS of nonlinear systems. Hence, our long-term aim is to rigorously settle  the applicability of Lyapunov methods to linear systems with bounded input operators and then to extend the methods to treat nonlinear PDEs with boundary controls and nonlinear boundary control systems in general.

\textbf{Notation.} Throughout this note, $X$ and $Y$ will refer to Banach spaces which may, at instances, be specified to be a Hilbert space with an inner product $\langle\cdot,\cdot\rangle$. 
Denote $\Rpn:=[0,+\infty)$.

For a Banach space $(U,\|\cdot\|_U)$, we denote by $L^\infty(0,t;U)$ the space of Bochner measurable functions $u:(0,t)\to U$ with finite essential supremum norm 
$\|u\|_{L^\infty(0,t)}:=\esssup_{s\in(0,t)}\|u(s)\|_U$ and similarly we define the common Lebesgue Bochner spaces $L^p(\Rpn,U)$, $p\in [1,\infty]$.
The space of bounded operators acting from $X$ to a Banach space $Y$ we denote by $L(X,Y)$ and  $L(X):=L(X,X)$.\newline
Recall the following well-known classes of comparison functions:
\begin{align*} 
\arraycolsep=1.3pt\def\arraystretch{1.2}
\calK :={}& \{\mu\in C(\Rpn,\Rpn) \:|\: \mu(0)=0, \mu \text{ strictly increasing}\},\\
 \calK_\infty :={}& \{\theta\in\calK \:|\:  \lim_{x\to\infty} \theta(x)=\infty\},\\
\mathcal{L} :={}&\{\gamma\in C(\Rpn,\Rpn)\:|\:\gamma \text{ strictly\ decreasing,}  \lim_{t\to\infty}\gamma(t)=0 \},\\
\mathcal{KL} := {}&\{ \beta:\Rpn\tm\Rpn\rightarrow\Rpn\: | \: \beta(\cdot,t)\in\calK\ \forall t,\quad \beta(s,\cdot)\in\mathcal{L}\ \forall s\neq0\}.
\end{align*}
As is common, the symbol $\lesssim$ is used to drop absolute multiplicative constants that do not depend on the variables appearing in the inequality.

\section{Linear systems and their stability}

In the following, let $A:D(A) \subset X\to X$ always be the infinitesimal generator of a strongly continuous semigroup $T:=(T(t))_{t\geq 0}$ on $X$ with a nonempty resolvent set $\rho(A)$.
Recall that a semigroup $T$ is called analytic if $T$ extends to an analytic mapping $z\mapsto T(z)$ on a sector $S_{\zeta}=\{z\in\mathbb{C}\setminus\{0\}\colon \arg(z)<\zeta\}$ for some $\zeta\in(0,\pi/2]$ and $\lim_{z\to0, z\in S_{\theta}}T(z)x=x$ for all $x\in X$ and some $\theta\in (0,\zeta)$.

For the rest of the paper, we will be interested in systems $\Sigma(A,B)$ given by abstract Cauchy problems of the form 
\begin{equation}
\label{eq:Linear_System}
\dot{x}(t)=Ax(t)+Bu(t),\qquad t>0, \ x(0)=x_{0},
\end{equation}
where 
$B$ is an operator, which is possibly unbounded, acting on the input space $U$. 
The reason for allowing unbounded operators $B$ stems from the study of systems with boundary or point controls \cite{JaZ12,TuW09}. In contrast to the (in general unbounded) operator $A$, $B$ will always be defined on the ``full space'' $U$, and the ``unboundedness'' is only reflected in the norm on the image space. 

To clarify the precise assumptions on $B$, let us recall a solution concept for \eqref{eq:Linear_System}. Consider the function $x$ (formally) given by
\begin{equation}\label{eq:mild}
 x(t)=T(t)x_{0}+\int_{0}^{t}T(t-s)Bu(s)\mathrm{d}s,\qquad t\ge0,
\end{equation}
for any $x_{0}\in X$ and $u\in L_{\mathrm{loc}}^{1}(\R_{\ge0},U)$. 
If  $x$, which we denote also $\phi(\cdot,x_0,u)$, maps $[0,\infty)$ to $X$, then we call $x$ the \emph{mild solution} of \eqref{eq:Linear_System}. 
If $B$ is a bounded operator in the sense that $B\in L(U,X)$, then \eqref{eq:mild} indeed defines such mild solution. 
For more general operators $B$, suitable properties are required, in particular such that the integral in \eqref{eq:mild} is well-defined in $X$ for all $t>0$ and inputs $u$ from a space of $U$-valued (equivalence classes of) functions such as $L^{q}(0,\infty;U)$.

  To introduce these properties, we will view the convergence of the integral in a weaker norm on $X$ as follows. Define the extrapolation space $X_{-1}$ as the completion of $X$ with respect to the norm 
$ \|x\|_{X_{-1}}:= \|(aI -A)^{-1}x\|_X$ for some $a \in \rho(A)$.
$X_{-1}$ is a Banach space (see \cite[Theorem 5.5, p. 126]{EnN00}) and different choices of $a \in \rho(A)$ generate equivalent norms on $X_{-1}$, see \cite[p. 129]{EnN00}.
As we know from the representation theorem \cite[Theorem 3.9]{Wei89b}, the input operator $B$ must satisfy the condition $B\in L(U,X_{-1})$ in order to give rise to a well-defined control system. 
Lifting of the state space $X$ to a larger space $X_{-1}$ is natural because the semigroup  $(T(t))_{t\ge 0}$ extends uniquely to a strongly continuous semigroup  $(T_{-1}(t))_{t\ge 0}$ on $X_{-1}$ whose generator $A_{-1}:X_{-1}\to X_{-1}$ is an extension of $A$ with $D(A_{-1}) = X$, see, e.g.,\ \cite[Section II.5]{EnN00}. If clear from the context, we may drop the subscript ``$-1$'' in our notation. 
Thus we may consider Equation \eqref{eq:Linear_System} on the Banach space $X_{-1}$ by replacing $A$ by $A_{-1}$ and henceforth interpret \eqref{eq:mild} in $X_{-1}$ as the integral exists in $X_{-1}$ when the extension of the semigroup is considered. The standing assumption for systems $\Sigma(A,B)$ is thus that $B\in L(U,X_{-1})$, where $X$ and $U$ are general Banach spaces. The lifting, however, comes at a price: $x$ has values in $X_{-1}$ in general. 
This motivates the following classical definition: 
\begin{definition}[\amc{$q$-admissible control operator}]
\label{def:q-admissibility} 
The operator $B\in L(U,X_{-1})$ is called a {\em $q$-admissible control operator} for $(T(t))_{t\ge 0}$, where $1\le q\le \infty$, if for all $t\geq 0$ and $u\in L^q([0,t],U)$, it holds that
\begin{eqnarray}
 \int_0^t T_{-1}(t-s)Bu(s)ds\in X.
\label{eq:q-admissibility}
\end{eqnarray}
If the analogous property holds for $U$-valued continuous functions $u\in C([0,t],U)$, or regulated functions $\mathrm{Reg}([0,t],U)$, we say that $B$ is $C$-admissible or $\mathrm{Reg}$-admissibile respectively. 
\end{definition}

If $B$ is $q$-admissible, then $x$ defined by \eqref{eq:mild} is indeed a mild solution of \eqref{eq:Linear_System}.
In the language of systems theory, $q$-admissibility of $B$ with respect to $A$ means precisely the forward-completeness of $\Sigma(A,B)$ for all inputs from $L^q$.
Any mild solution is continuous if $B$ is $q$-admissible for $q<\infty$, see \cite[Proposition 2.3]{Wei89b}. In the critical case $q=\infty$, this is also known in many (practically relevant) situations, see e.g.\ \cite{JNP18,JSZ17}, but is an open question in the general case.

For linear systems with  admissible $B$, we study the following stability notions.
\begin{definition}[\amc{$L^p$-input-to-state stability}]
System $\Sigma(A,B)$ is called {\em $L^p$-input-to-state stable ($L^p$-ISS)}, if there exist functions $\beta\in \mathcal{KL}$ and $ \mu\in {\mathcal K}_\infty$ such that for every $x_0\in X$, every $t\ge 0$ and every $u\in L^p(0,t;U)$,
 the mild solution $x$ of \eqref{eq:Linear_System} satisfies $x(t)\in X$ and 
\begin{equation}\label{eqnISSCb}
\left\| x(t)\right\| \le \beta(\|x_0\|,t)+  \mu (\|u\|_{L^p(0,t)}).
\end{equation}
\end{definition}

ISS of \eqref{eq:Linear_System} can be characterized as follows, see \cite{JNP18}.
\begin{proposition}[{\cite[Proposition 2.10]{JNP18}}]
\label{thm:ISS-Criterion-lin-sys-with-unbounded-operators}
Let $p\in[1,+\infty]$. 
The following assertions are equivalent:  
\begin{enumerate}
    \item[(i)] $\Sigma(A,B)$ is $L^p$-ISS.
    \item[(ii)] $A$ generates an exponentially stable semigroup  and $B$ is $p$-admissible.
\end{enumerate}
\end{proposition}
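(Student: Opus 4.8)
The plan is to establish the two implications separately, extracting exponential stability of the semigroup from the $\mathcal{KL}$-part of the ISS estimate applied with zero input, and deriving the input bound from $p$-admissibility combined with a time-uniformity argument powered by exponential decay.

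For \emph{(i)$\Rightarrow$(ii)} I would first put $u\equiv0$ in \eqref{eq:mild}, so that $x(t)=T(t)x_0$ and \eqref{eqnISSCb} becomes $\|T(t)x_0\|\le\beta(\|x_0\|,t)$ for all $x_0\in X$, $t\ge0$. Restricting to $\|x_0\|\le1$ gives $\|T(t)\|\le\beta(1,t)$, and since $\beta(1,\cdot)\in\LC$ this forces $\|T(t)\|\to0$. From here exponential stability is classical: uniform boundedness yields $\sup_{s\in[0,1]}\|T(s)\|<\infty$, one picks $t_0$ with $\|T(t_0)\|<1$, and the semigroup law propagates this to an estimate $\|T(t)\|\le Me^{-\lambda t}$ for suitable $M,\lambda>0$. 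The $p$-admissibility of $B$ is then essentially free: taking $x_0=0$ in the definition of $L^p$-ISS already requires that the mild solution $\int_0^tT_{-1}(t-s)Bu(s)\,\rmd s$ lie in $X$ for every $t\ge0$ and $u\in L^p(0,t;U)$, which is precisely condition \eqref{eq:q-admissibility}.

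For \emph{(ii)$\Rightarrow$(i)}, writing $\|T(t)\|\le Me^{-\lambda t}$, the homogeneous term is immediately controlled: $\|T(t)x_0\|\le Me^{-\lambda t}\|x_0\|=:\beta(\|x_0\|,t)$ with $\beta\in\mathcal{KL}$. The substance is to bound the input-to-state map $\Phi_t u:=\int_0^tT_{-1}(t-s)Bu(s)\,\rmd s$ uniformly in $t$. I would first invoke $p$-admissibility together with the closed graph theorem (using that $\Phi_t$ is bounded into $X_{-1}$, that its range lies in $X$, and that limits in $X_{-1}$ are unique) to get, for each fixed $t$, a finite constant $c(t)$ with $\|\Phi_tu\|_X\le c(t)\|u\|_{L^p(0,t)}$. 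The crucial step is then to show $c_\infty:=\sup_{t\ge0}c(t)<\infty$. Here one fixes $\tau>0$ large enough that $Me^{-\lambda\tau}<1$ and, for $t>\tau$, decomposes $\Phi_tu$ into the contribution of $[t-\tau,t]$, which is $\Phi_\tau$ applied to a time-shifted input and hence bounded by $c(\tau)\|u\|_{L^p(0,t)}$, plus the term $T(\tau)\Phi_{t-\tau}(u|_{[0,t-\tau]})$, of norm at most $Me^{-\lambda\tau}c(t-\tau)\|u\|_{L^p(0,t)}$. Iterating gives a geometric series with ratio $Me^{-\lambda\tau}<1$ and thus $c_\infty<\infty$. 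Combining the two bounds, $\|x(t)\|\le\beta(\|x_0\|,t)+\mu(\|u\|_{L^p(0,t)})$ with $\mu(r):=c_\infty r\in\calK_\infty$, which is $L^p$-ISS.

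I expect the main obstacle to be exactly this last point --- upgrading finite-time $p$-admissibility to the time-uniform constant $c_\infty$ --- since that is where exponential stability genuinely enters; the bookkeeping requires checking that the shifted inputs stay in $L^p(0,t;U)$ with non-increasing norm and that the splitting and the action of $T(\tau)$ are all legitimate in the extrapolation space $X_{-1}$. The endpoint $p=\infty$ is handled by the same argument; the continuity subtleties of mild solutions noted after Definition~\ref{def:q-admissibility} are irrelevant here, as only the membership $x(t)\in X$ and the pointwise bound \eqref{eqnISSCb} are used.
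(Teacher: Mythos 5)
The paper offers no proof of this proposition: it is quoted directly from \cite[Proposition 2.10]{JNP18}, so there is nothing in-paper to compare against. Your argument is correct and is essentially the standard proof behind that citation --- extracting exponential stability from the zero-input $\mathcal{KL}$ bound, reading off $p$-admissibility from the requirement $x(t)\in X$ at $x_0=0$, and, for the converse, upgrading the closed-graph bound on $\Phi_t$ to a $t$-uniform constant via the shift-and-split decomposition with geometric ratio $Me^{-\lambda\tau}<1$, which is indeed the only step where exponential stability is genuinely needed.
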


\begin{definition}[ISS Lyapunov function]
\label{def:ISS Lyapunov function}
Consider $\Sigma(A,B)$ and suppose that $B$ is $\infty$-admissible.
A continuous function $V:X\to \Rpn$ is called {a {\em (non-coercive) ISS Lyapunov function}} for $\Sigma(A,B)$ if there exist 
$\alpha_{2},\alpha_{3}\in \mathcal{K_\infty}$ and $\sigma\in \mathcal{K}_{\infty}$ such that 
\begin{align}
\label{eq:one-sided-sandwich}
0 < V(x)\leq{}& \alpha_{2}(\|x\|),\quad  x\neq 0,
\end{align}
and for all $x\in X$ and all $u\in L_{loc}^{\infty}(0,\infty;U)$,
\begin{align}
\dot{V}_{u}(x)\leq{}& - \alpha_{3}(\|x\|) + \sigma(\limsup_{t\to 0^{+}}\|u\|_{L^{\infty}(0,t)}),\label{eq:ISSLF2}
\end{align}
where $\dot{V}_{u}(x)$ is the right-hand Dini derivative of $V(x(\cdot))$ at $t=0$:
\[
\dot{V}_{u}(x):=\limsup_{h\to0^{+}}\frac{1}{h}\Big(V\big(\phi(h,x,u)\big)-V(x)\Big),
\]
and $x(\cdot)$ is the mild solution \eqref{eq:mild} of \eqref{eq:Linear_System} with initial condition $x$ and input $u$. 

An ISS Lyapunov function is called {\em coercive} if there exists  $\alpha_{1}\in\mathcal{K}_\infty$ such that 
 \begin{equation*}
 \alpha_{1}(\|x\|)\leq V(x), \qquad x\in X.
 \end{equation*}
The function $V$ is called {\em Lyapunov function} {(for the uncontrolled system \eqref{eq:Linear_System})} if $V$ is an ISS Lyapunov function for $\Sigma(A,0)$. 
\end{definition}

\begin{definition}[Quadratic \amc{ISS} Lyapunov function] 
\label{def:Quadratic-LF}
Let $X$ be a Hilbert space. 
An ISS Lyapunov function $V:X\to \Rpn$ is called a \emph{quadratic ISS Lyapunov function}  if there exists a self-adjoint operator $P\in L(X)$ such that $V(x)=\langle Px,x\rangle$ for all $x\in X$. In this case, we also say that \emph{$V$ is quadratic}. 
\end{definition}
We call a bounded, self-adjoint operator $P$  \emph{positive} if $\langle Px,x\rangle > 0$ for all $x\in X\setminus\{0\}$.
Clearly, if $V: x \mapsto \langle Px,x\rangle$ is a quadratic Lyapunov function, then $P$ is positive. Furthermore, $P$ is invertible (with a bounded inverse) if and only if $V$ is coercive. 


Classical constructions of Lyapunov functions via solution of the Lyapunov operator equation, see, e.g.,~\cite[Theorem 4.1.3]{CuZ20} and the references therein, lead to quadratic Lyapunov functions. On the other hand, quadratic Lyapunov functions can be easily differentiated, and there are efficient numerical schemes for the construction of quadratic Lyapunov functions, such as the sum of squares (SoS) method.

The following characterization is elementary, but it motivates a definition of a quadratic Lyapunov function for general Banach spaces.
\begin{proposition}
\label{prop:Characterization-quadratic-LFs} 
Let $X$ be a Hilbert space. An ISS Lyapunov function $V:X\to \Rpn$ is a quadratic ISS Lyapunov function if and only if there exists $F\in L(X)$ such that  
\begin{equation}
\label{eq:quadratic-Banach}
V(x)=\|Fx\|^2,\qquad\text{ for all }x\in X.
\end{equation}
\end{proposition}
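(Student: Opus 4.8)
The plan is to reduce the equivalence to the elementary operator-theoretic fact that every bounded positive semidefinite self-adjoint operator on a Hilbert space possesses a bounded positive self-adjoint square root. Note that in both implications $V$ is already assumed to be an ISS Lyapunov function, so nothing has to be verified concerning the estimates \eqref{eq:one-sided-sandwich}--\eqref{eq:ISSLF2}; only the algebraic shape of $V$ is at stake.

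For the implication ``\eqref{eq:quadratic-Banach} $\Rightarrow$ $V$ quadratic'', I would set $P:=F^{*}F\in L(X)$. This operator is self-adjoint, and for every $x\in X$ one has $\langle Px,x\rangle=\langle F^{*}Fx,x\rangle=\langle Fx,Fx\rangle=\|Fx\|^{2}=V(x)$, so $V$ is quadratic in the sense of Definition~\ref{def:Quadratic-LF}.

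For the converse, suppose $V(x)=\langle Px,x\rangle$ with $P=P^{*}\in L(X)$. Since $V$ takes values in $\Rpn$, we get $\langle Px,x\rangle\geq 0$ for all $x\in X$, i.e.\ $P$ is positive semidefinite. Hence $P$ admits a (unique) positive self-adjoint square root $F:=P^{1/2}\in L(X)$ — for instance via the continuous functional calculus, or the classical iterative construction — with $F^{2}=P$. Then $\|Fx\|^{2}=\langle Fx,Fx\rangle=\langle F^{*}Fx,x\rangle=\langle F^{2}x,x\rangle=\langle Px,x\rangle=V(x)$ for all $x\in X$, which is \eqref{eq:quadratic-Banach}.

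There is no genuine obstacle here; the only non-routine ingredient is the existence of the operator square root, which is a standard Hilbert space result. I would also add a remark that $F$ can be chosen injective precisely when $P$ is positive (equivalently, $V$ is positive definite as in \eqref{eq:one-sided-sandwich}) and invertible in $L(X)$ precisely when $V$ is coercive, consistent with the observations following Definition~\ref{def:Quadratic-LF}.
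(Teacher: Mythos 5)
Your argument is correct and follows essentially the same route as the paper: one direction takes $P:=F^{*}F$ and observes $\langle Px,x\rangle=\|Fx\|^{2}$, the other takes $F:=P^{1/2}$ via the standard square-root theorem for bounded positive self-adjoint operators (the paper cites \cite[Theorem 12.3.4]{TuW09} for this). The closing remark on injectivity and invertibility of $F$ matches the observations the paper makes right after Definition~\ref{def:Quadratic-LF}.
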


\begin{proof}
Let $V$ be a quadratic ISS Lyapunov function, i.e.,\ $V(x)=\langle Px,x\rangle$, for all $x\in X$, and for some self-adjoint, positive $P$.
Since $P^{\frac{1}{2}}$ is well-defined, self-adjoint and positive, see, e.g., \cite[Theorem 12.3.4]{TuW09}, we conclude that $V(x)=\|P^{\frac{1}{2}}x\|^2$ for all $x\in X$. 
Conversely, let $V$ be an ISS Lyapunov function such that there is $F\in L(X)$ with  $V(x)=\|Fx\|^2$ for all $x\in X$. 
Then 
\[
0< V(x) = \langle Fx,Fx\rangle = \langle F^*Fx,x\rangle,\quad x\neq 0,
\]
and thus $P=F^*F$ is self-adjoint and positive, and hence $V$ is quadratic.
\end{proof}

In view of Proposition~\ref{prop:Characterization-quadratic-LFs}, in the Banach space setting, we will call functions $V$ as in \eqref{eq:quadratic-Banach} quadratic ISS Lyapunov functions.

The use of $\mathcal{K}$-functions in the definition of ISS Lyapunov functions is natural in the view of nonlinear systems. 
It is not surprising that for quadratic Lyapunov functions there is no need to consider general comparison functions.

\begin{lemma}
\label{lem:quadratic}
Let $X$ be a Hilbert space. Let $V:X\to \Rpn$ be a coercive, quadratic ISS Lyapunov function for $\Sigma(A,B)$ with $\infty$-admissible $B$ and with $\sigma(r)=a_4r^2$ in \eqref{eq:ISSLF2} for some $a_{4}>0$. Then there exist constants $a_{1},a_{2},a_{3}>0$ such that 
\begin{equation}
\label{eq:quadratic-sandwich}
a_{1}\|x\|^{2}\leq V(x)\leq a_{2}\|x\|^{2},\qquad x\in X,
\end{equation}
and for all $u\in L^\infty(\Rpn,U)$ we have
\begin{equation}
\label{eq:quadratic-decay}
\dot{V}_{u}(x) \leq -a_3 \|x\|^{2} + a_4 (\limsup_{t\to 0^{+}}\|u\|_{L^{\infty}(0,t)})^2,\quad x\in X.
\end{equation}
\end{lemma}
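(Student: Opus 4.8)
The plan is to reduce the general comparison-function bounds from Definition~\ref{def:ISS Lyapunov function} to quadratic ones, exploiting that a quadratic $V$ is $2$-homogeneous and that the mild-solution map $(x_{0},u)\mapsto x(\cdot)$ of \eqref{eq:mild} is linear for the linear system $\Sigma(A,B)$.

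For the sandwich bound \eqref{eq:quadratic-sandwich}, write $V(x)=\langle Px,x\rangle$ with $P\in L(X)$ self-adjoint and positive (Definition~\ref{def:Quadratic-LF}). The upper bound holds with $a_{2}:=\|P\|$, since $\langle Px,x\rangle\le\|P\|\,\|x\|^{2}$. For the lower bound, recall the remark after Definition~\ref{def:Quadratic-LF}: coercivity of a quadratic $V$ is equivalent to $P$ being boundedly invertible. A positive, self-adjoint, boundedly invertible operator has spectrum contained in $[\,\|P^{-1}\|^{-1},\|P\|\,]$, so $\langle Px,x\rangle\ge\|P^{-1}\|^{-1}\|x\|^{2}$ and one may take $a_{1}:=\|P^{-1}\|^{-1}>0$. (Equivalently, argue via $V(x)=\|P^{1/2}x\|^{2}$ from Proposition~\ref{prop:Characterization-quadratic-LFs} together with the bounded invertibility of $P^{1/2}$.)

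For the decay estimate \eqref{eq:quadratic-decay}, the case $x=0$ is contained directly in \eqref{eq:ISSLF2}, as $\alpha_{3}(0)=0$. For $x\neq 0$, I would set $\lambda:=\|x\|>0$, $e:=\lambda^{-1}x$ (so $\|e\|=1$) and $v:=\lambda^{-1}u$. By linearity of \eqref{eq:mild}, if $y(\cdot)$ denotes the mild solution with initial state $e$ and input $v$, then $\lambda y(\cdot)$ is the mild solution with initial state $\lambda e=x$ and input $\lambda v=u$. Since $V(\lambda y)=\lambda^{2}V(y)$ for all $y\in X$, for every $h>0$ we get $h^{-1}\big(V(\lambda y(h))-V(\lambda e)\big)=\lambda^{2}h^{-1}\big(V(y(h))-V(e)\big)$, and passing to $\limsup_{h\to 0^{+}}$ yields $\dot V_{u}(x)=\lambda^{2}\,\dot V_{v}(e)$. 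Applying \eqref{eq:ISSLF2} at $e$ with $\|e\|=1$ and $\sigma(r)=a_{4}r^{2}$, then multiplying by $\lambda^{2}$ and pulling the constant $\lambda$ out of the $L^{\infty}$-norm of $v=\lambda^{-1}u$, gives
\begin{equation*}
\dot V_{u}(x)\;\le\; -\alpha_{3}(1)\,\|x\|^{2} + a_{4}\,\lambda^{2}\Big(\limsup_{t\to 0^{+}}\|v\|_{L^{\infty}(0,t)}\Big)^{2} \;=\; -\alpha_{3}(1)\,\|x\|^{2} + a_{4}\Big(\limsup_{t\to 0^{+}}\|u\|_{L^{\infty}(0,t)}\Big)^{2} .
\end{equation*}
Setting $a_{3}:=\alpha_{3}(1)>0$ then gives \eqref{eq:quadratic-decay} for all $x\in X$.

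The argument is a straightforward homogenization with no deep ingredient. The only points needing care are that coercivity of the quadratic $V$ indeed forces $P$ (equivalently $P^{1/2}$) to be boundedly invertible --- exactly the remark recorded after Definition~\ref{def:Quadratic-LF} --- and the bookkeeping of the scaling factor $\lambda$ through the $\limsup$ defining the Dini derivative and through the norm of the rescaled input $v$. I expect this last bookkeeping to be the only, and minor, obstacle.
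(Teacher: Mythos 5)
Your proof is correct and follows essentially the same route as the paper: the decay estimate \eqref{eq:quadratic-decay} is obtained by exactly the paper's homogenization, namely $\dot V_{u}(x)=\|x\|^{2}\,\dot V_{u/\|x\|}(x/\|x\|)$ via linearity of the mild solution in $(x_{0},u)$ and $2$-homogeneity of $V$, followed by \eqref{eq:ISSLF2} at a unit vector. The only (immaterial) difference is in the sandwich bound \eqref{eq:quadratic-sandwich}, where the paper takes $a_{1}=\alpha_{1}(1)$ and $a_{2}=\alpha_{2}(1)$ from the same identity $V(x)=V(x/\|x\|)\|x\|^{2}$, while you use the spectral bounds $\|P^{-1}\|^{-1}\|x\|^{2}\le\langle Px,x\rangle\le\|P\|\,\|x\|^{2}$ together with the recorded fact that coercivity forces $P$ to be boundedly invertible.
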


\begin{proof}
Let $V$ be a quadratic ISS Lyapunov function for $\Sigma(A,B)$ with $\alpha_1,\alpha_2$ as in Definition~\ref{def:ISS Lyapunov function}.
Then $V(x) = V(\frac{x}{\|x\|})\|x\|^2$ for $x\neq 0$ and 
\[
a_1:=\alpha_1(1) \leq V\Big(\frac{x}{\|x\|}\Big)\leq \alpha_2(1)=:a_2.
\]
For $x \in X$, $x\neq 0$, and any $u\in L^\infty(\Rpn,U)$, we compute using joint linearity of the flow 
$\phi$ with respect to $x$, and $u$, that
\begin{eqnarray*}
\dot{V}_{u}(x) 
& = & \limsup_{t\to 0^{+}}\frac{V(\phi(t,x,u)) - V(x)}{t}\\
& = & \limsup_{t\to 0^{+}}\frac{V\Big(\phi\big(t,\|x\|\frac{x}{\|x\|},\|x\|\frac{u}{\|x\|}\big)\Big) - V\big(\|x\|\frac{x}{\|x\|}\big)}{t}\\
& = & \|x\|^2\limsup_{t\to 0^{+}}\frac{ V\Big(\phi\big(t,\frac{x}{\|x\|},\frac{u}{\|x\|}\big)\Big) - V\big(\frac{x}{\|x\|}\big)}{t}\\
& \leq & \|x\|^2\Big(-\alpha_3(1) + \frac{a_4}{\|x\|^2} \big(\limsup_{t\to 0^{+}}\|u\|_{L^{\infty}(0,t)}\big)^2 \Big)\\
& = & -\alpha_3(1)\|x\|^2 + a_4 \big(\limsup_{t\to 0^{+}}\|u\|_{L^{\infty}(0,t)}\big)^2.
\end{eqnarray*}
\end{proof}

ISS Lyapunov functions as defined above are of virtue to study $L^\infty$-ISS. For the analysis of $L^p$-ISS, another type of ISS Lyapunov functions is needed.

\begin{definition}[\amc{$L^p$-ISS Lyapunov function}]
\label{def:L2-ISS Lyapunov function}\-
Let $p\in[1,\infty)$ and let $\Sigma(A,B)$ be a system with $C$-admissible $B\in L(U,X_{-1})$. A continuous function $V:X\to \Rpn$  is called {a {\em (non-coercive) $L^p$-ISS Lyapunov function}} for $\Sigma(A,B)$  if there is $\alpha_2\in \mathcal{K}_{\infty}$ such that \eqref{eq:one-sided-sandwich} holds, and there exist constants $a_{3},a_4>0$ such that 
\begin{equation}
\label{eq:quadratic-decay-L2-ISS}
\dot{V}_{u}(x) \leq -a_3 \|x\|^{2} + a_4\|u(0)\|^p_U,
\end{equation}
 for all $x \in X$ and all $u\in C(\Rpn,U)$. 
If there exists an (injective) operator $F \in L(X)$ such that \eqref{eq:quadratic-Banach} holds, we say that $V$ is a \emph{quadratic $L^p$-ISS Lyapunov function}, which is called \emph{coercive} if additionally \eqref{eq:quadratic-sandwich} is satisfied for some $a_1,a_2>0$.
\end{definition}
We emphasize the difference between the notions of \emph{ISS Lyapunov function} and \emph{$L^{p}$-ISS Lyapunov function}.

\begin{proposition}
\label{prop:L2-ISS-Lyapunov-suff-cond} 
If $B$ is $C$-admissible and there is a coercive quadratic $L^2$-ISS Lyapunov function for  $\Sigma(A,B)$, then  $\Sigma(A,B)$ is $L^p$-ISS for all $p\in[2,+\infty]$.
\end{proposition}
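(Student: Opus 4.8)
The plan is to integrate the differential dissipation inequality \eqref{eq:quadratic-decay-L2-ISS} along mild solutions to obtain the ISS estimate \eqref{eqnISSCb}, and then invoke Proposition~\ref{thm:ISS-Criterion-lin-sys-with-unbounded-operators} to upgrade $L^2$-ISS to $L^p$-ISS for all $p\in[2,\infty]$. First I would use the coercivity sandwich \eqref{eq:quadratic-sandwich}: since $a_1\|x\|^2\le V(x)\le a_2\|x\|^2$ and $\dot V_u(x)\le -a_3\|x\|^2 + a_4\|u(0)\|_U^p$ with $p=2$, along a mild solution $x(\cdot)$ one expects the comparison inequality $\dot V_u(x(t))\le -\frac{a_3}{a_2}V(x(t)) + a_4\|u(t)\|_U^2$ to hold in the Dini sense for a.e.\ $t$. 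Here one must be slightly careful, because \eqref{eq:quadratic-decay-L2-ISS} is stated for $u\in C(\Rpn,U)$ and evaluated via $u(0)$; the standard remedy is to apply the inequality at each time $t_0$ to the shifted input $u(t_0+\cdot)$ and the solution segment started at $x(t_0)$, using a density/continuity argument to pass from continuous inputs to general $L^2_{\mathrm{loc}}$ inputs (this is exactly the kind of reduction used when $B$ is only $C$-admissible, and it is why the hypothesis is $C$-admissibility).

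Next I would apply the scalar comparison principle (Grönwall-type) to $w(t):=V(x(t))$ satisfying $\dot w(t)\le -\frac{a_3}{a_2}w(t)+a_4\|u(t)\|_U^2$, yielding
\begin{equation*}
V(x(t))\le e^{-\lambda t}V(x_0) + a_4\int_0^t e^{-\lambda(t-s)}\|u(s)\|_U^2\,\rmd s,\qquad \lambda:=\frac{a_3}{a_2}.
\end{equation*}
Bounding the convolution integral crudely by $\int_0^t\|u(s)\|_U^2\,\rmd s = \|u\|_{L^2(0,t)}^2$ and using $V(x_0)\le a_2\|x_0\|^2$ together with $a_1\|x(t)\|^2\le V(x(t))$, one gets
\begin{equation*}
\|x(t)\|^2 \le \frac{a_2}{a_1}e^{-\lambda t}\|x_0\|^2 + \frac{a_4}{a_1}\|u\|_{L^2(0,t)}^2,
\end{equation*}
and taking square roots (using $\sqrt{a+b}\le\sqrt a+\sqrt b$) produces an estimate of the form \eqref{eqnISSCb} with $\beta(r,t)=\sqrt{a_2/a_1}\,e^{-\lambda t/2}r\in\mathcal{KL}$ and $\mu(r)=\sqrt{a_4/a_1}\,r\in\mathcal{K}_\infty$. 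This shows $\Sigma(A,B)$ is $L^2$-ISS; in particular the mild solution indeed takes values in $X$, which is consistent with $C$-admissibility.

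Finally, by Proposition~\ref{thm:ISS-Criterion-lin-sys-with-unbounded-operators} applied with $p=2$, $L^2$-ISS is equivalent to $A$ generating an exponentially stable semigroup together with $2$-admissibility of $B$. Since $2$-admissibility implies $p$-admissibility for all $p\ge 2$ (the inclusion $L^p(0,t;U)\hookrightarrow L^2(0,t;U)$ on bounded intervals, with $L^\infty$ included), the same proposition run in the reverse direction for each $p\in[2,\infty]$ gives $L^p$-ISS. I expect the main obstacle to be the first step: rigorously converting the pointwise Dini inequality \eqref{eq:quadratic-decay-L2-ISS}, which is phrased only for continuous inputs and via the instantaneous value $u(0)$, into an integrated inequality valid along mild solutions for $L^2$ inputs. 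One has to justify (i) that $t\mapsto V(x(t))$ is sufficiently regular (locally Lipschitz or at least absolutely continuous) so the Dini estimate integrates, and (ii) the time-shift/approximation argument that replaces $u(0)$ by $\|u(t)\|_U^2$ inside the integral; both are standard in the ISS-Lyapunov literature but deserve a careful sentence or two rather than being glossed over.
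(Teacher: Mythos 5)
Your proof is correct in substance, but it takes a genuinely different route from the paper's. The paper disposes of the statement in two lines: for $q\in(2,\infty)$ it invokes \cite[Theorem~1]{Mir20}, which manufactures (non-quadratic) $L^q$-ISS Lyapunov functions from the given $L^2$-ISS Lyapunov function by a nonlinear rescaling of $V$, and for $p=\infty$ it appeals to Proposition~\ref{thm:ISS-Criterion-lin-sys-with-unbounded-operators}. You instead integrate the dissipation inequality to obtain the $L^2$-ISS estimate directly, and then observe that $2$-admissibility implies $p$-admissibility for every $p\in[2,\infty]$ via the embedding $L^p(0,t;U)\hookrightarrow L^2(0,t;U)$ on bounded intervals, so that Proposition~\ref{thm:ISS-Criterion-lin-sys-with-unbounded-operators} yields $L^p$-ISS for all such $p$ at once. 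Your route is more elementary and self-contained; the price is the technical work you correctly flag (integrating a Dini-derivative inequality along mild solutions, the time-shift argument, and the passage from continuous to $L^2$ inputs), which the citation to \cite{Mir20} hides. One ordering remark: since $B$ is only assumed $C$-admissible, the mild solution for a general $L^2$ input is not a priori $X$-valued, so the density step should not be phrased as extending the integrated estimate to $L^2$ inputs directly. Rather, derive the estimate for continuous inputs, read off exponential stability (take $u=0$) and $2$-admissibility (take $x_0=0$, so the estimate bounds the input-to-state map on the dense subspace $C([0,t],U)$ of $L^2(0,t;U)$, whose continuous extension agrees with the $X_{-1}$-valued integral), and only then conclude $L^2$-ISS from Proposition~\ref{thm:ISS-Criterion-lin-sys-with-unbounded-operators}. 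With that adjustment the argument is complete.
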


\begin{proof}
As the flow $\phi$ of $\Sigma(A,B)$ depends continuously on inputs, the claim follows from \cite[Theorem 1]{Mir20}, where, using nonlinear rescaling of $V$, an explicit construction of the (non-quadratic) $L^q$-ISS Lyapunov functions was provided, for all $q\in(2,+\infty)$. The case $p=\infty$ directly follows from Proposition \ref{thm:ISS-Criterion-lin-sys-with-unbounded-operators}.
\end{proof}

Next, we show that $L^p$-ISS Lyapunov functions for linear systems cannot be quadratic unless $p=2$.
\begin{proposition}
\label{prop:Restrictions-quadratic-LFs} 
Let $V$ be a (coercive or non-coercive) quadratic $L^p$-ISS Lyapunov function for $\Sigma(A,B)$ with $B\neq 0$ and $p\in[1,+\infty)$. Then $p=2$.
\end{proposition}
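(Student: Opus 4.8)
The plan is to exploit a scaling argument in the input variable. The quadratic $L^p$-ISS Lyapunov function $V(x)=\|Fx\|^2$ has a decay estimate of the form $\dot V_u(x)\le -a_3\|x\|^2+a_4\|u(0)\|_U^p$. Since $V$ is manifestly $2$-homogeneous in $x$, and the flow $\phi(t,x,u)$ is jointly linear in $(x,u)$, the natural approach is to test the inequality at $x=0$ (or near $x=0$) with a constant input $u\equiv v$ for some fixed $v\in U$, and then rescale $v\mapsto \lambda v$ for $\lambda>0$. The left-hand side should scale quadratically in $\lambda$, while the right-hand side scales like $\lambda^p$; comparing the two as $\lambda\to 0^+$ and $\lambda\to\infty$ forces $p=2$, unless the whole contribution from the input is trivial, which is where the hypothesis $B\neq 0$ enters.

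More precisely, first I would fix $v\in U$ with $Bv\neq 0$ in $X_{-1}$ (possible since $B\neq 0$), and evaluate $\dot V_u(0)$ with $u\equiv v$. By joint linearity of the flow, $\phi(t,\lambda\cdot 0,\lambda v)=\lambda\,\phi(t,0,v)$, so $V(\phi(t,0,\lambda v))=\lambda^2 V(\phi(t,0,v))$, hence $\dot V_{\lambda v}(0)=\lambda^2 \dot V_v(0)$. On the other hand the Lyapunov inequality gives $\dot V_{\lambda v}(0)\le a_4\lambda^p\|v\|_U^p$. So $\lambda^2 \dot V_v(0)\le a_4\lambda^p\|v\|_U^p$ for all $\lambda>0$. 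If $p>2$, letting $\lambda\to\infty$ gives no contradiction directly, but letting $\lambda\to 0^+$ after dividing by $\lambda^p$ would; if $p<2$, dividing by $\lambda^2$ and letting $\lambda\to\infty$ forces $\dot V_v(0)\le 0$, while dividing by $\lambda^p$ and letting $\lambda\to 0^+$ forces $\dot V_v(0)\ge 0$ — so in fact $\dot V_v(0)=0$ whenever $p\neq 2$. The remaining task is to rule out $\dot V_v(0)=0$ for all such $v$, i.e.\ to show that a nonzero admissible $B$ necessarily produces a strictly positive initial growth of $V$ along the flow from the origin with a suitable constant input. This is essentially the statement that $V$ genuinely "sees" the input, and it should follow from $V(x)=\|Fx\|^2$ with $F$ injective together with the short-time behavior $\phi(t,0,v)=\int_0^t T_{-1}(t-s)Bv\,ds$, which for small $t$ behaves like $tBv$ in $X_{-1}$ and maps into $X$ by $C$-admissibility; one then needs $\frac{d}{dt}\big|_{t=0^+}\|F\phi(t,0,v)\|^2$ to be nonzero for some $v$, contradicting $\dot V_v(0)=0$ for all $v$.

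The main obstacle I anticipate is exactly this last step: making rigorous that $\dot V_v(0)>0$ for some input direction $v$ when $B\neq 0$. The subtlety is that $\phi(t,0,v)$ starts at $0$, where $V$ vanishes, so the first-order term in $t$ could in principle vanish even though $B\neq 0$ — one must look carefully at the asymptotics of $\|F\phi(t,0,v)\|^2$ as $t\to 0^+$. If $\phi(t,0,v)\sim t\,Bv$ in an appropriate sense and $FBv\neq 0$ (note $F$ injective, but $Bv\in X_{-1}$ rather than $X$ in general, so one has to be careful about whether $F$ even acts on it — presumably $C$-admissibility guarantees $\phi(t,0,v)\in X$ for $t>0$), then $\|F\phi(t,0,v)\|^2\sim c\,t^2$ or even $\sim c\,t$ depending on the regularity, either way giving a nonzero Dini derivative or forcing a sign. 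I would likely split into the bounded-$B$ case (where $\phi(t,0,v)=tBv+o(t)$ in $X$ and the computation is clean, yielding $\dot V_v(0)=0$ automatically but then re-examining via $x$ near but not equal to $0$) and the genuinely unbounded case. Alternatively — and this may be cleaner — instead of starting from $x=0$ one tests the inequality at a small nonzero state and combines the $\|x\|^2$ term with the input term, choosing $x$ and $u$ scaled together to isolate the mismatch in homogeneity degrees; this avoids the degeneracy at the origin entirely and is probably the route the authors take.
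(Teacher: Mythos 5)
Your scaling idea is exactly the one the paper uses: the published proof also works at $x_0=0$, lets $a_m$ be the optimal constant in $\dot V_u(0)\le a\|u(0)\|_U^p$, observes that $\dot V_{cu}(0)=c^2\dot V_u(0)$ by linearity of the flow and $2$-homogeneity of $V$, and for $p\ne 2$ rescales $c$ to contradict the optimality of $a_m$. So the first half of your argument reproduces the paper's computation. The difference lies in the non-degeneracy step: the paper simply asserts ``as $B\ne 0$, $a_m>0$'', whereas you correctly flag this as the delicate point. In fact your own remark about the bounded case shows it cannot be taken for granted: if $B\in L(U,X)$ then $\phi(h,0,u)=O(h)$ in $X$, so $V(\phi(h,0,u))=\|F\phi(h,0,u)\|^2=O(h^2)$ and $\dot V_u(0)=0$ for \emph{every} input, i.e.\ $a_m=0$. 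Hence the route through $x_0=0$ alone does not close --- neither in your write-up nor, as written, in the paper's proof.

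The gap in your proposal is therefore that the decisive step is only gestured at rather than carried out: as it stands you have shown that $p\ne 2$ forces $\dot V_v(0)=0$ for all $v$, which is not yet a contradiction. Your suggested fix --- scaling state and input together --- does work and can be completed as follows. From $\phi(t,cx,cu)=c\,\phi(t,x,u)$ and $V(cy)=c^2V(y)$ one gets $\dot V_{cu}(cx)=c^2\dot V_u(x)$, while \eqref{eq:quadratic-decay-L2-ISS} gives $\dot V_{cu}(cx)\le -a_3c^2\|x\|^2+a_4c^p\|u(0)\|_U^p$; dividing by $c^2$ and letting $c\to0^+$ (if $p>2$) or $c\to\infty$ (if $p<2$) yields $\dot V_u(x)\le -a_3\|x\|^2$ for \emph{all} $x$ and $u$. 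Now fix $u\in C(\Rpn,U)$ and set $g(t):=V(\phi(t,0,u))$; by the cocycle property the upper right Dini derivative of $g$ at $\tau>0$ equals $\dot V_{u(\tau+\cdot)}(\phi(\tau,0,u))\le 0$, so the continuous function $g$ is nonincreasing on $(0,\infty)$, and together with $g(0)=0$ and $g\ge 0$ this forces $g\equiv 0$. Since \eqref{eq:one-sided-sandwich} makes $F$ injective, $\phi(t,0,u)=\int_0^tT_{-1}(t-s)Bu(s)\,\mathrm{d}s=0$ for all $t$ and all $u$; taking $u\equiv v$ and using $\frac1t\int_0^tT_{-1}(s)Bv\,\mathrm{d}s\to Bv$ in $X_{-1}$ as $t\to0^+$ gives $Bv=0$ for every $v\in U$, contradicting $B\ne 0$. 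With this paragraph added your argument is complete, and it handles the bounded-$B$ case that the origin-based argument misses.
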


\begin{proof}
By definition, there exists $a \geq 0$ such that
\[
\dot{V}_{u}(0) \leq a\|u(0)\|^p_U,\quad u\in C(\Rpn,U).
\]
Take $a_m$ as the infimum of $a>0$ satisfying the previous property. 
As $B\neq 0$, $a_m>0$. By the linearity of $(t,u) \mapsto \phi(t,0,u)$ in $u$, and as $V$ is quadratic, we see that for any $u\in C(\Rpn,U)$, and any $c>0$
\begin{align*}
\dot{V}_{cu}(0) 
&= \limsup_{h\to +0}\frac{V(\phi(h,0,cu))}{h} = c^2\limsup_{h\to +0}\frac{V(\phi(h,0,u))}{h}\\
&\leq c^2 a_m\|u(0)\|^p_U = c^{2-p} a_m\|cu(0)\|^p_U,
\end{align*}
and taking $c>1$ for $p>2$ and $c<1$ for $p<2$, we come to a contradiction to the choice of $a_m$.
\end{proof}

As a corollary of Lemma~\ref{lem:quadratic}, we obtain 
\begin{proposition}
\label{prop:Linfty-ISS-Lyapunov-fun-and-L2-ISS-Lf}
Consider a system $\Sigma(A,B)$ with $C$-admissible $B$.
Let $V$ be a quadratic ISS Lyapunov function for $\Sigma(A,B)$ with $\sigma(r)=ar^2$ for some $a>0$. 
Then $V$ is an $L^2$-ISS Lyapunov function.
\end{proposition}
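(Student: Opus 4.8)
The plan is to verify the two defining conditions of Definition~\ref{def:L2-ISS Lyapunov function} for $V$ directly; the essential observation is that along \emph{continuous} inputs the quantity $\limsup_{t\to0^{+}}\|u\|_{L^{\infty}(0,t)}$ appearing in \eqref{eq:ISSLF2} equals $\|u(0)\|_{U}$, which turns a bound with $\sigma(r)=ar^{2}$ into one of the form $a_{4}\|u(0)\|_{U}^{2}$. For the sandwich estimate I would use Proposition~\ref{prop:Characterization-quadratic-LFs} to write $V(x)=\|Fx\|^{2}$ with $F\in L(X)$; then $0<V(x)\le\|F\|_{L(X)}^{2}\|x\|^{2}$ for $x\neq0$, and positivity of $V$ on $X\setminus\{0\}$ forces $F$ to be injective, so \eqref{eq:one-sided-sandwich} holds with $\alpha_{2}(r)=\|F\|_{L(X)}^{2}r^{2}\in\mathcal{K}_{\infty}$. (This is precisely where quadraticity enters: a generic ISS Lyapunov function only comes with $\alpha_{2}\in\mathcal{K}$, which need not lie in $\mathcal{K}_{\infty}$.)

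Next I would extract the dissipation inequality. Since $V$ is quadratic it is $2$-homogeneous, $V(cx)=c^{2}V(x)$, and the flow $\phi$ of a linear system is jointly linear in $(x_{0},u)$; hence the rescaling computation in the proof of Lemma~\ref{lem:quadratic} applies verbatim and uses nothing beyond $\alpha_{3}$ and the hypothesis $\sigma(r)=ar^{2}$ — in particular it does \emph{not} use coercivity. It yields, for every $x\in X$ and every admissible input $u$,
\begin{equation*}
\dot{V}_{u}(x)\le -a_{3}\|x\|^{2}+a\,\Big(\limsup_{t\to0^{+}}\|u\|_{L^{\infty}(0,t)}\Big)^{2},\qquad a_{3}:=\alpha_{3}(1)>0,
\end{equation*}
with the first term on the right simply absent when $x=0$. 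Finally, for $u\in C(\Rpn,U)$ the real-valued map $s\mapsto\|u(s)\|_{U}$ is continuous at $0$, so $\|u\|_{L^{\infty}(0,t)}\to\|u(0)\|_{U}$ as $t\to0^{+}$, and since $\sigma$ is increasing the displayed inequality becomes $\dot{V}_{u}(x)\le -a_{3}\|x\|^{2}+a\|u(0)\|_{U}^{2}$, i.e.\ \eqref{eq:quadratic-decay-L2-ISS} with $p=2$ and $a_{4}=a$. Together with the first step this shows that $V$ is an $L^{2}$-ISS Lyapunov function (in fact a quadratic one, via the same $F$).

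The only point that genuinely needs a word of care — and the closest thing here to an obstacle — is the admissibility bookkeeping: Definition~\ref{def:ISS Lyapunov function} presupposes $\infty$-admissibility of $B$ so that $\dot{V}_{u}(x_{0})$ is meaningful for $u\in L^{\infty}_{\mathrm{loc}}(0,\infty;U)$, whereas the proposition only posits $C$-admissibility. I would resolve this by noting that $C$-admissibility is exactly what makes the mild solution \eqref{eq:mild}, and hence $\dot{V}_{u}(x_{0})$, well-defined along the continuous inputs $u\in C(\Rpn,U)$ that occur in \eqref{eq:quadratic-decay-L2-ISS}, and that the hypothesis ``$V$ is a quadratic ISS Lyapunov function'' is invoked only through \eqref{eq:one-sided-sandwich} and through \eqref{eq:ISSLF2} tested at such continuous $u$ — precisely the inputs for which the homogeneity argument above is run. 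Everything else is the elementary identity $\limsup_{t\to0^{+}}\|u\|_{L^{\infty}(0,t)}=\|u(0)\|_{U}$ for continuous $u$, together with the homogeneity computation already carried out in Lemma~\ref{lem:quadratic}.
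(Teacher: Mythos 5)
Your proposal is correct and follows exactly the route the paper intends: the result is stated there as an immediate corollary of Lemma~\ref{lem:quadratic} with no written proof, and your argument --- the homogeneity computation from that lemma (which, as you rightly note, does not use coercivity for the decay estimate) combined with the identity $\limsup_{t\to0^{+}}\|u\|_{L^{\infty}(0,t)}=\|u(0)\|_{U}$ for continuous $u$ --- is precisely the omitted verification. Your extra care about the $\mathcal{K}_\infty$ upper bound and the $\infty$- versus $C$-admissibility bookkeeping fills in details the paper glosses over, without deviating from its approach.
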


\amc{
\begin{remark}
Let $A$ generate an exponentially stable semigroup $T$ on a Hilbert space $X$.
\begin{itemize}
 \item[(i)] If $B$ is $C$-admissible, but not 2-admissible, then there is no coercive quadratic $L^p$-ISS Lyapunov function for $p\in[1,+\infty)$. 
As if such a function would exist, then $p=2$ by Proposition~\ref{prop:Restrictions-quadratic-LFs}, $\Sigma(A,B)$ would be $L^2$-ISS by Proposition~\ref{prop:L2-ISS-Lyapunov-suff-cond}, which would imply that $B$ is 2-admissible, a contradiction. 

 \item[(ii)] At the same time, under certain (verifiable) assumptions on $A$, \cite[Theorem 5.3]{JMP20} ensures for $\infty$-admissible $B$ the existence of a non-coercive quadratic ISS Lyapunov function $V$ for $\Sigma(A,B)$ with $\sigma(r)=ar^2$ for some $a>0$. 
Proposition~\ref{prop:Linfty-ISS-Lyapunov-fun-and-L2-ISS-Lf} shows that $V$ is an $L^2$-ISS Lyapunov function. It has to be non-coercive by the above argument. 
In particular, this means that $C$-admissibility of $B$ together with the existence of a non-coercive $L^2$-ISS Lyapunov function does not ensure $L^2$-ISS, i.e. coercivity in Proposition~\ref{prop:L2-ISS-Lyapunov-suff-cond} cannot be relaxed to non-coercivity. This indicates that non-coercive Lyapunov theory is well-suited for analysis of systems with continuous or $L^\infty$ inputs, but is not suited for analysis of $L^p$-ISS with $p\in[1,+\infty)$.
\end{itemize}

\end{remark}
}

\section{Upgrading quadratic Lyapunov functions to $L^2$-ISS Lyapunov functions}

In the sequel, it will be of interest for us to determine under which conditions a quadratic Lyapunov function for the undisturbed system $\Sigma(A,0)$ is also an $L^2$-ISS Lyapunov function for the system $\Sigma(A,B)$. 
Here, we show several results of this kind.

\begin{proposition}
\label{prop:quadratic-ISS-LFs-with-inputs-Banach} 
Let $X$ be a Banach space and let $V:X \to \Rpn$ be a quadratic (coercive or non-coercive) Lyapunov function for $\Sigma(A,0)$ with $V(x) = \|Fx\|^2$ for a certain $F\in L(X)$ and all $x\in X$. Assume that $B \in L(U,X_{-1})$ is C-admissible and that there exists $K>0$ such that for each $u \in C(\Rpn,U)$
\begin{eqnarray}
\limsup_{t\to +0}\frac{1}{t}\Big\|F\int_{0}^{t}T_{-1}(t-s)Bu(s)\mathrm{d}s\Big\| \leq C \|u(0)\|_{U}.
\label{eq:Requirement-on-B}
\end{eqnarray}
Then $V$ is a quadratic (coercive or non-coercive, respectively) $L^2$-ISS Lyapunov function for $\Sigma(A,B)$. In particular, \eqref{eq:Requirement-on-B} holds for $B\in L(U,X)$.
\end{proposition}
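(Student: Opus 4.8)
The plan is to show that $V(x)=\|Fx\|^2$ satisfies the defining estimate \eqref{eq:quadratic-decay-L2-ISS} of a (non-coercive) $L^2$-ISS Lyapunov function for $\Sigma(A,B)$, reusing the fact that it already works for $\Sigma(A,0)$. The upper bound \eqref{eq:one-sided-sandwich} (and, in the coercive case, the lower bound in \eqref{eq:quadratic-sandwich}) is inherited verbatim from the hypothesis that $V$ is a quadratic Lyapunov function for $\Sigma(A,0)$, since neither bound involves the input. Also the fact that $V$ is quadratic in the sense of \eqref{eq:quadratic-Banach} is unchanged. So the whole content is the dissipation inequality along trajectories of the controlled system.

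First I would write the mild solution of $\Sigma(A,B)$ with initial value $x_0$ and input $u\in C(\Rpn,U)$ as $x(t) = T(t)x_0 + z(t)$, where $z(t):=\int_0^t T_{-1}(t-s)Bu(s)\,\mathrm{d}s$; by $C$-admissibility, $z(t)\in X$ for all $t$, and $z(0)=0$. Then $Fx(t) = FT(t)x_0 + Fz(t)$, so $V(x(t)) = \|FT(t)x_0\|^2 + 2\,\mathrm{Re}\langle FT(t)x_0, Fz(t)\rangle + \|Fz(t)\|^2$ — or, in a general Banach space where $\|\cdot\|$ need not come from an inner product, I would instead use the elementary inequality $\|a+b\|^2 \le \|a\|^2 + 2\|a\|\,\|b\| + \|b\|^2$ together with $\|a\|^2 - \|a_0\|^2 = (\|a\|-\|a_0\|)(\|a\|+\|a_0\|)$ to compare $V(x(t))$ with $V(T(t)x_0)$. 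The key step is the estimate
\[
\limsup_{t\to 0^+}\frac{1}{t}\big(V(x(t)) - V(T(t)x_0)\big) \;\le\; a_4 \|u(0)\|_U^2
\]
for a suitable $a_4>0$. To get this, note $\|Fz(t)\| \le C't\,\|u(0)\|_U + o(t)$ by \eqref{eq:Requirement-on-B} (where I write $C':=C$), while $\|FT(t)x_0\|$ stays bounded as $t\to 0^+$; hence the cross term contributes $O(t)$ times a bounded factor, giving a bound of the form $\limsup_{t\to0^+}\frac1t(V(x(t))-V(T(t)x_0)) \le 2C'M(x_0)\|u(0)\|_U$ with $M(x_0):=\limsup_{t\to0^+}\|FT(t)x_0\| \le \|F\|\,\|x_0\|$, plus a term $\le \limsup_{t\to0^+} t\, C'^2\|u(0)\|_U^2 = 0$ from $\|Fz(t)\|^2$. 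This produces a term linear in $\|x_0\|$ and linear in $\|u(0)\|_U$, which is not yet of the required form $a_3\|x_0\|^2 + a_4\|u(0)\|_U^2$; I would absorb it by Young's inequality $2C'\|F\|\|x_0\|\|u(0)\|_U \le \ep \|x_0\|^2 + \ep^{-1}C'^2\|F\|^2\|u(0)\|_U^2$, choosing $\ep < a_3^{(0)}$ where $a_3^{(0)}$ is the decay rate for $\Sigma(A,0)$, so that $\dot V_u(x_0) \le \dot V_0(x_0) + \ep\|x_0\|^2 + (\text{const})\|u(0)\|_U^2 \le -(a_3^{(0)}-\ep)\|x_0\|^2 + a_4\|u(0)\|_U^2$. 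Here I would also use superadditivity of $\limsup$ to split $\dot V_u(x_0)$ into the contribution of $T(t)x_0$ (handled by the Lyapunov property for $\Sigma(A,0)$) and the perturbation just estimated.

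The main obstacle I anticipate is purely a matter of care rather than depth: making the $\limsup$ bookkeeping rigorous, i.e.\ justifying that $\dot V_u(x_0) \le \dot V_0(x_0) + \limsup_{t\to0^+}\frac1t(V(x(t))-V(T(t)x_0))$ and that the cross term and the $\|Fz(t)\|^2$ term really are $o(1)$ after dividing by $t$ — the latter needs $\|Fz(t)\| = O(t)$, which is exactly what \eqref{eq:Requirement-on-B} delivers, so one must be sure \eqref{eq:Requirement-on-B} is used in the form "$\|Fz(t)\|\le C t\|u(0)\|_U$ up to lower order", not merely the $\limsup$ statement, or else phrase everything through $\limsup$'s subadditivity. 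The final sentence, that \eqref{eq:Requirement-on-B} holds automatically when $B\in L(U,X)$, follows because then $z(t) = \int_0^t T(t-s)Bu(s)\,\mathrm{d}s$ with $\|z(t)\| \le t\sup_{s\in[0,t]}\|T(t-s)\|\,\|B\|\,\sup_{s\in[0,t]}\|u(s)\|_U$, and by strong continuity of $T$ and continuity of $u$, $\frac1t\|Fz(t)\| \to \|F\|\cdot(\text{bound})$; more precisely $\frac1t\|F z(t)\|_X \le \|F\|\,\|B\|\,\sup_{s\in[0,t]}\|T(t-s)\|\sup_{s\in[0,t]}\|u(s)\|_U$, and letting $t\to0^+$ the right-hand side tends to $\|F\|\,\|B\|\,\|u(0)\|_U$ (using $\sup_{r\in[0,t]}\|T(r)\|$ bounded near $0$), which gives \eqref{eq:Requirement-on-B} with $C$ depending only on $\|F\|$, $\|B\|$ and the uniform bound on $\|T(r)\|$ for small $r$.
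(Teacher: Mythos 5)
Your argument is correct and follows essentially the same route as the paper: expand $\|F(T(t)x+z(t))\|^2$ via the triangle inequality into the unperturbed term, a cross term controlled by \eqref{eq:Requirement-on-B}, and a quadratic-in-$z$ term that vanishes after dividing by $t$, then absorb the mixed term with Young's inequality; the $B\in L(U,X)$ case is likewise handled by the standard limit $\frac1t\int_0^t T(t-s)Bu(s)\,\mathrm{d}s\to Bu(0)$. The only detail worth making explicit is that the quadratic decay rate $\dot V_0(x_0)\le -a_3^{(0)}\|x_0\|^2$ you invoke before choosing $\ep$ comes from the homogeneity argument of Lemma~\ref{lem:quadratic}, which the paper cites at exactly that point.
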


\begin{proof}
For any $x \in X$ and $u\in C(\Rpn,U)$ we have by triangle inequality
\begin{align}
\frac{1}{t}\Big[\Big\|F\Big(T(t)x+ & \int_{0}^{t}T_{-1}(t-s)Bu(s)\mathrm{d}s\Big)\Big\|^2 - \|Fx\|^2 \Big] \nonumber\\
&\leq   \frac{1}{t}\Big[\|FT(t)x\|^2 - \|Fx\|^2\Big] + \frac{1}{t^2}\cdot t\Big\|F\int_{0}^{t}T_{-1}(t-s)Bu(s)\mathrm{d}s\Big\|^2 \nonumber\\
&+ \frac{2}{t}\|FT(t)x\| \Big\|F\int_{0}^{t}T_{-1}(t-s)Bu(s)\mathrm{d}s\Big\|.
\label{eq:aux-quadratic-ISS-LF}
\end{align}
Taking the limit $t\to+0$, using that
\[
\limsup_{h\to0^{+}} \frac{\|FT(t)x\|^2 - \|Fx\|^2}{t} 
= \dot{V}_{u=0}(x),
\]
and exploiting the property \eqref{eq:Requirement-on-B}, we obtain that 
\begin{align*}
\dot{V}_u(x) &=\limsup_{t\to0{+}}\frac{1}{t}\Big[\Big\|F\Big(T(t)x+\int_{0}^{t}T_{-1}(t-s)Bu(s)\mathrm{d}s\Big)\Big\|^2 - \|Fx\|^2 \Big]\\
&\leq  \dot{V}_{u=0}(x) + 2\|Fx\| K \|u(0)\|_{U}.
\end{align*}
In view of Lemma~\ref{lem:quadratic},  the decay rate of quadratic Lyapunov functions is quadratic, whence there is $a_3>0$ such that 
\begin{align*}
\dot{V}_u(x) 
&\leq  -a_3 \|x\|^2 + 2 \|x\| \|F\| K \|u(0)\|_{U}.
\end{align*}
Using Young's inequality, we obtain for any $\varepsilon>0$ that 
\begin{align*}
\dot{V}_u(x) 
&\leq  -a_3 \|x\|^2 + \varepsilon \|x\|^2  + \frac{1}{\varepsilon} \|F\|^2 K^2\|u(0)\|_{U}^2,
\end{align*}
and choosing $\varepsilon \in (0,a_3)$, we see that $V$ is a quadratic $L^2$-ISS Lyapunov function for $\Sigma(A,B)$.

If $B\in L(U,X)$, then \eqref{eq:Requirement-on-B} follows since for any $u\in C(\Rpn,U)$,
\begin{eqnarray*}
\lim\limits_{h \rightarrow +0} \frac{1}{h} \int_0^h{T(h-r)B u(r)dr} = B u(0), 
\end{eqnarray*}
\end{proof}

We note that condition \eqref{eq:Requirement-on-B} nearly implies boundedness of $B$ if the Lyapunov function is coercive, i.e.\ $F$ is boundedly invertible. Indeed, in that case it can be  easily shown that $B$ is $L^{1}$-admissible, which is known to imply that $B\in L(U,X)$ if $X$ is reflexive, \cite{Wei89b}.

\section{Coercive quadratic Lyapunov functions for linear systems with bounded input operators}
\label{sec:Coercive quadratic Lyapunov functions for linear systems with bounded input operators}

Lyapunov functions are important since they are certificates for stability properties. As we are interested in ISS in this note, we recall the following result from \cite[Theorem 8]{MiW17c}:
\begin{proposition}
\label{prop:Lyapunov-criterion-for-ISS-bounded-operators} 
Let $X$ be a Hilbert space, $A$ be the generator of a strongly continuous semigroup and $B\in L(U,X)$.
The following statements are equivalent:
\begin{itemize}
 \item[(i)] \eqref{eq:Linear_System} is $L^p$-ISS for some $p\in [1,+\infty]$.
 \item[(ii)] \eqref{eq:Linear_System} is $L^p$-ISS for all $p\in [1,+\infty]$.
 \item[(iii)] There is a coercive $L^1$-ISS Lyapunov function for \eqref{eq:Linear_System}, which is an equivalent norm on $X$.
 \item[(iv)] There is a non-coercive quadratic $L^2$-ISS Lyapunov function for \eqref{eq:Linear_System}.
\end{itemize}
\end{proposition}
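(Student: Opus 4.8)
\emph{Strategy and Step 1: (i) $\Leftrightarrow$ (ii).} The plan is to show that each of (i)--(iv) is equivalent to exponential stability of the semigroup $T$, using the results of the previous sections to dispose of the input operator $B$. Since $B\in L(U,X)$, for every $u\in L^1([0,t],U)$ the map $s\mapsto T(t-s)Bu(s)$ is Bochner integrable with values in $X$, so the integral in \eqref{eq:mild} lies in $X$; hence $B$ is $q$-admissible for every $q\in[1,\infty]$. By Proposition~\ref{thm:ISS-Criterion-lin-sys-with-unbounded-operators}, $\Sigma(A,B)$ is then $L^p$-ISS for one $p$ if and only if $A$ generates an exponentially stable semigroup, which in turn is equivalent to $L^p$-ISS for all $p$. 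This gives (i) $\Leftrightarrow$ (ii) and pins both to exponential stability of $T$, which is the hub for the rest.

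\emph{Step 2: (ii) $\Rightarrow$ (iii) and (ii) $\Rightarrow$ (iv).} Assume $T$ exponentially stable. Then $P:=\int_0^\infty T(t)^\ast T(t)\,\mathrm dt$ defines a self-adjoint, positive, boundedly invertible operator in $L(X)$, and $V(x):=\langle Px,x\rangle=\|P^{1/2}x\|^2$ is a coercive quadratic Lyapunov function for $\Sigma(A,0)$ with $\dot V_{u=0}(x)=-\|x\|^2$ (the classical Lyapunov operator equation, cf.\ \cite[Theorem 4.1.3]{CuZ20}). As $B\in L(U,X)$, condition \eqref{eq:Requirement-on-B} holds, so Proposition~\ref{prop:quadratic-ISS-LFs-with-inputs-Banach} upgrades $V$ to a coercive quadratic $L^2$-ISS Lyapunov function for $\Sigma(A,B)$; being coercive, this is in particular a (non-coercive) quadratic $L^2$-ISS Lyapunov function, which is (iv). For (iii), set $W:=\sqrt V$, an equivalent norm on $X$ by bounded invertibility of $P$; a short computation (for $x\neq0$, $\dot W_u(x)=\dot V_u(x)/(2\sqrt{V(x)})$, combined with the estimate $\dot V_u(x)\le -a_3\|x\|^2+c\|x\|\,\|u(0)\|_U$ obtained in the proof of Proposition~\ref{prop:quadratic-ISS-LFs-with-inputs-Banach} and the norm equivalence) yields $\dot W_u(x)\le -c_1\|x\|+c_2\|u(0)\|_U$, exhibiting $W$ as a coercive ISS Lyapunov function that is an equivalent norm, i.e.\ (iii), as in \cite[Theorem 8]{MiW17c}.

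\emph{Step 3: (iii) $\Rightarrow$ (i) and (iv) $\Rightarrow$ (i).} If (iii) holds, set $u\equiv0$ in the dissipation inequality for the equivalent norm $W$; since $\|x\|\gtrsim W(x)$, this yields $\dot W_{u=0}(x)\le -c\,W(x)$, and a standard comparison argument for upper right Dini derivatives gives $W(T(t)x)\le e^{-ct}W(x)$, whence $\|T(t)x\|\le Me^{-ct}\|x\|$; thus $T$ is exponentially stable and (i) follows from Step~1. If (iv) holds, set $u\equiv0$ in \eqref{eq:quadratic-decay-L2-ISS}: then $\dot V_{u=0}(x)\le -a_3\|x\|^2$ while $0<V(x)\le\|F\|^2\|x\|^2$; integrating this Dini-derivative inequality along the trajectory $t\mapsto T(t)x$ of $\Sigma(A,0)$ gives $a_3\int_0^\infty\|T(t)x\|^2\,\mathrm dt\le V(x)<\infty$ for every $x\in X$, so $T$ is exponentially stable by the classical Datko--Pazy theorem (e.g.\ \cite{EnN00}), and again (i) follows from Step~1.

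\emph{Main obstacle.} The only genuinely delicate implication is (iv) $\Rightarrow$ (i): hypothesis (iv) provides merely a \emph{non-coercive} Lyapunov function, for which the usual Lyapunov argument does not control the state norm; the rescue is that for a \emph{linear} system the finiteness of $\int_0^\infty\|T(t)x\|^2\,\mathrm dt$ for all $x$ is already equivalent to exponential stability via Datko--Pazy, and this is exactly where linearity enters essentially. The remaining technical points---$q$-admissibility of bounded $B$, solvability of the Lyapunov operator equation, and the passage from Dini-derivative dissipation estimates to genuine exponential/ISS bounds---are standard, and I would cite them rather than reprove them.
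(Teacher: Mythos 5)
The paper does not prove this proposition itself---it is quoted from \cite[Theorem 8]{MiW17c}---so your proposal can only be judged on its own merits, and there it contains one genuine error that happens to collide with the main point of the very paper you are proving it for. In Step~2 you assert that $P:=\int_0^\infty T(t)^*T(t)\,\mathrm{d}t$ is \emph{boundedly invertible}, so that $V(x)=\langle Px,x\rangle$ is a \emph{coercive} quadratic Lyapunov function and $W=\sqrt{V}=\|P^{1/2}\cdot\|$ is an equivalent norm. This is false in general: exponential stability only gives $0<\langle Px,x\rangle\le c\|x\|^2$, not a lower bound $c_1\|x\|^2\le\langle Px,x\rangle$. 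A concrete counterexample is the nilpotent left shift on $L^2(0,1)$, for which $\langle Px,x\rangle=\int_0^1\sigma|x(\sigma)|^2\,\mathrm{d}\sigma$ is not coercive. More to the point, Theorem~\ref{thm:Quad-coercive-nonanalytic-1} shows that a coercive quadratic Lyapunov function (equivalently, an equivalent norm of the form $\|Fx\|$ that is a Lyapunov function) exists \emph{if and only if} $T$ is similar to a contraction, and Proposition~\ref{prop1} exhibits exponentially stable analytic semigroups for which this fails. So your Step~2, if correct, would contradict Proposition~\ref{prop1}; see also Remark~\ref{rem:Norms-as-LFs}, which warns against exactly this move.

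The damage is localized. Your derivation of (iv) survives, because (iv) only requires a \emph{non-coercive} quadratic $L^2$-ISS Lyapunov function, and $V(x)=\int_0^\infty\|T(t)x\|^2\,\mathrm{d}t$ together with Proposition~\ref{prop:quadratic-ISS-LFs-with-inputs-Banach} delivers precisely that (one must check the decay estimate $\dot V_{u=0}(x)=-\|x\|^2$ directly rather than via Lemma~\ref{lem:quadratic}, which assumes coercivity). Your Steps~1 and~3 are fine; in particular the Datko--Pazy argument for (iv)\,$\Rightarrow$\,(i) is the right idea and correctly isolates where linearity enters. What is missing is a correct proof of (ii)\,$\Rightarrow$\,(iii): the coercive equivalent-norm Lyapunov function must be built \emph{non-quadratically}, e.g.\ as $N(x):=\sup_{t\ge0}\mathrm{e}^{\varepsilon t}\|T(t)x\|$ for small $\varepsilon>0$ (an equivalent norm satisfying $N(T(h)x)\le \mathrm{e}^{-\varepsilon h}N(x)$, hence $\dot N_{u=0}(x)\le-\varepsilon N(x)$, with the bounded $B$ handled by the triangle inequality as in \cite{MiW17c}). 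As the paper remarks after stating the proposition, the functions realizing (iii) are homogeneous of degree one but never quadratic.
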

Coercive $L^1$-ISS Lyapunov functions constructed in the proof of Proposition~\ref{prop:Lyapunov-criterion-for-ISS-bounded-operators} in \cite{MiW17c} to show the equivalence between (i) and (iii) are never quadratic. In fact, they are norms on $X$, equivalent to 
$\|\cdot\|$, and thus they are homogeneous of degree one. 
In this section, we show a criterion for the existence of a quadratic coercive $L^2$-ISS Lyapunov function for linear systems with bounded input operators.

We say that a \emph{semigroup $T$ is similar to a contraction semigroup}, if there exists a boundedly invertible operator $S:X\to X$ so that $(ST(t)S^{-1})_{t\geq 0}$ is a contraction semigroup. 

 Furthermore, we call $\langle \cdot,\cdot\rangle_{new}$ an \emph{equivalent scalar product} in $X$, if $\langle \cdot,\cdot\rangle_{new}$ is a scalar product in $X$ that induces a norm $\|\cdot\|_{new} = \sqrt{\langle \cdot,\cdot\rangle_{new}}$, that is equivalent to the norm $\|\cdot\|$ in $X$. The following lemma is well-known, and the proof is elementary using the Lumer--Phillips theorem.

\begin{lemma}
\label{lem:Similar-to-contraction-andequivalent-scalp} 
Let $X$ be a Hilbert space, and $A$ be the generator of a $C_0$-semigroup $T$.
Then, $T$ is similar to a contraction semigroup if and only if there exists an equivalent scalar product $\langle \cdot,\cdot\rangle_{new}$ in $X$ such that $A$ is dissipative, i.e.\ 
\begin{eqnarray}
\Re\langle Ax,x\rangle_{new}\leq 0\quad \forall x\in D(A).
\label{eq:dissipativity}
\end{eqnarray}
\end{lemma}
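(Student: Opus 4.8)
The plan is to prove both implications via the Lumer--Phillips theorem, treating the "new scalar product" as the natural object that encodes similarity. For the forward direction, suppose $T$ is similar to a contraction semigroup, i.e.\ there is a boundedly invertible $S \in L(X)$ such that $T_S(t) := S T(t) S^{-1}$ is a contraction semigroup. Its generator is $A_S = S A S^{-1}$ with $D(A_S) = S D(A)$. By the Lumer--Phillips theorem (or simply by differentiating $\|T_S(t)y\|^2$ at $t=0$), a contraction semigroup has a dissipative generator: $\Re\langle A_S y, y\rangle \le 0$ for all $y \in D(A_S)$. Now define $\langle x_1, x_2\rangle_{new} := \langle S x_1, S x_2\rangle$. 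This is clearly a scalar product, and since $S$ is boundedly invertible, $\|x\|_{new} = \|Sx\|$ satisfies $\|S^{-1}\|^{-1}\|x\| \le \|x\|_{new} \le \|S\|\,\|x\|$, hence it is equivalent to $\|\cdot\|$. For $x \in D(A)$, put $y = Sx \in D(A_S)$; then $\Re\langle Ax, x\rangle_{new} = \Re\langle SAx, Sx\rangle = \Re\langle A_S y, y\rangle \le 0$, which is exactly \eqref{eq:dissipativity}.

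For the converse, suppose there is an equivalent scalar product $\langle\cdot,\cdot\rangle_{new}$ with $\Re\langle Ax,x\rangle_{new}\le 0$ for all $x\in D(A)$. Equivalence of the induced norms means the identity map $(X,\|\cdot\|) \to (X,\|\cdot\|_{new})$ is boundedly invertible; I will use a concrete representation of this via a self-adjoint positive operator. Since $\langle\cdot,\cdot\rangle_{new}$ is bounded as a sesquilinear form on $(X,\|\cdot\|)$ and bounded below, there exists a boundedly invertible, self-adjoint positive $Q \in L(X)$ with $\langle x_1,x_2\rangle_{new} = \langle Q x_1, x_2\rangle$; set $S := Q^{1/2}$, which is also boundedly invertible, self-adjoint and positive, so that $\langle x_1, x_2\rangle_{new} = \langle S x_1, S x_2\rangle$ and $\|x\|_{new} = \|Sx\|$. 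Then for $x \in D(A)$ and $y = Sx$, dissipativity of $A$ in the new product reads $\Re\langle A_S y, y\rangle = \Re\langle SASx, Sx\rangle$... wait, one must be careful: $A_S = SAS^{-1}$, so with $y = Sx$ we get $A_S y = SAx$, hence $\Re\langle A_S y, y\rangle = \Re\langle SAx, Sx\rangle = \Re\langle Ax, x\rangle_{new} \le 0$. Thus $A_S$ is dissipative on $(X,\|\cdot\|)$. Since $A_S$ is similar to $A$, it generates the $C_0$-semigroup $(S T(t) S^{-1})_{t\ge 0}$, and in particular $\rho(A_S) \ne \emptyset$, so some $\lambda > 0$ lies in $\rho(A_S)$; hence $\mathrm{ran}(\lambda I - A_S) = X$. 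By the Lumer--Phillips theorem, $A_S$ generates a contraction semigroup, i.e.\ $(S T(t) S^{-1})_{t\ge 0}$ is a contraction semigroup, so $T$ is similar to a contraction.

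The only mildly delicate point is justifying the representation $\langle\cdot,\cdot\rangle_{new} = \langle S\cdot, S\cdot\rangle$ with $S$ boundedly invertible, self-adjoint, positive; this is the statement that an equivalent inner product is given by a bounded, boundedly invertible positive operator via the Riesz representation / Lax--Milgram theorem, together with existence of the positive square root (as already invoked in the proof of Proposition~\ref{prop:Characterization-quadratic-LFs}). There is nothing deep here, which is why the lemma is labelled well-known; the main "obstacle" is merely bookkeeping with domains, namely keeping straight that $D(A_S) = S D(A)$ and that the similarity transformation intertwines dissipativity in the two inner products exactly as above. I would state the Lumer--Phillips theorem being used (a densely defined operator on a Hilbert space generates a contraction semigroup iff it is dissipative and $\lambda I - A$ is surjective for some $\lambda > 0$) and cite, e.g., \cite[Section II.3]{EnN00}.
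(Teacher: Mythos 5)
Your proof is correct and follows exactly the route the paper indicates: the paper omits the argument, stating only that the lemma is ``well-known and the proof is elementary using the Lumer--Phillips theorem,'' and your write-up (transporting dissipativity through the similarity $S$, representing the equivalent inner product as $\langle Q\cdot,\cdot\rangle$ with $Q$ positive and boundedly invertible, taking $S=Q^{1/2}$, and invoking Lumer--Phillips with the range condition supplied by $\rho(A_S)\neq\emptyset$) is precisely that elementary argument, with the domain bookkeeping handled correctly.
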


The following result settles the existence of coercive, quadratic Lyapunov functions for systems with bounded $B$.
\begin{theorem}
\label{thm:Quad-coercive-nonanalytic-1} 
Let $X$ be a Hilbert space, $B\in L(U,X)$, and let $A$ generate an exponentially stable semigroup $T$ on $X$. The following statements are equivalent:
\begin{enumerate}[label=(\roman*)]
\item\label{thm:quad-coercivity-itm0} There exists a coercive, quadratic $L^2$-ISS Lyapunov function for \eqref{eq:Linear_System}.
\item\label{thm:quad-coercivity-itm1} There exists a coercive, quadratic Lyapunov function for \eqref{eq:Linear_System} with $B=0$.
\item\label{thm:quad-coercivity-itm20} There exists an equivalent scalar product $\langle \cdot,\cdot\rangle_{new}$ such that $A$ is dissipative, i.e.
\begin{eqnarray}
\Re\langle Ax,x\rangle_{new}\leq 0,\quad x\in D(A)
\label{eq:Dissipativity-equivalent-scalp}
\end{eqnarray} 
and $x\mapsto \scalp{x}{x}_{new} = \|x\|^2_{new}$ is a quadratic ISS Lyapunov function for \eqref{eq:Linear_System}.

\item\label{thm:quad-coercivity-itm2} There exists an equivalent scalar product $\langle \cdot,\cdot\rangle_{new}$ such that $A$ is dissipative.

\item\label{thm:quad-coercivity-itm3} $T$ is similar to a contraction semigroup.

\item\label{thm:quad-coercivity-itm32} There is an equivalent norm in $X$ of the form $W(x) = \|Fx\|$, for $F \in L(X)$, such that $\dot{W}(x) \leq -W(x)$ for $B=0$.
\end{enumerate}
\end{theorem}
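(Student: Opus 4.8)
The strategy is to prove the cycle of implications. The equivalence of \ref{thm:quad-coercivity-itm2} and \ref{thm:quad-coercivity-itm3} is exactly Lemma \ref{lem:Similar-to-contraction-andequivalent-scalp}, so that link is free. The remaining work splits into two flavors: the implications among the "structural" statements \ref{thm:quad-coercivity-itm2}, \ref{thm:quad-coercivity-itm3}, \ref{thm:quad-coercivity-itm20}, \ref{thm:quad-coercivity-itm1}, \ref{thm:quad-coercivity-itm0} (which say roughly "dissipativity in an equivalent product $\Rightarrow$ the norm-squared works"), and then the more careful implications \ref{thm:quad-coercivity-itm0} or \ref{thm:quad-coercivity-itm1} $\Rightarrow$ \ref{thm:quad-coercivity-itm2}, i.e. extracting an equivalent-product dissipativity from the mere existence of a coercive quadratic Lyapunov function.

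First I would handle \ref{thm:quad-coercivity-itm2} $\Rightarrow$ \ref{thm:quad-coercivity-itm20}. Given an equivalent scalar product with $A$ dissipative, set $V(x)=\|x\|_{new}^2=\langle x,x\rangle_{new}$. By equivalence of norms, $V$ satisfies the sandwich \eqref{eq:quadratic-sandwich}. For the dissipativity estimate along trajectories, I would compute, for $x_0\in D(A)$ and $u\in C(\Rpn,U)$, the Dini derivative of $t\mapsto\langle x(t),x(t)\rangle_{new}$ at $t=0$ using $x(t)=T(t)x_0+\int_0^t T_{-1}(t-s)Bu(s)\,ds$. The leading term gives $2\Re\langle Ax_0,x_0\rangle_{new}\le 0$; the cross term and the $u$-quadratic term are controlled exactly as in the proof of Proposition \ref{prop:quadratic-ISS-LFs-with-inputs-Banach}, using that $B\in L(U,X)$ so that \eqref{eq:Requirement-on-B} holds and $\tfrac1h\int_0^h T(h-r)Bu(r)\,dr\to Bu(0)$. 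A density argument (continuity of $V$, exponential stability giving the needed uniform bounds) extends the decay estimate from $D(A)$ to all of $X$, in the Dini-derivative sense; combined with Young's inequality this makes $x\mapsto\|x\|_{new}^2$ a quadratic ISS Lyapunov function, i.e. \ref{thm:quad-coercivity-itm20}, and with $\sigma(r)=ar^2$ it is in particular a coercive quadratic $L^2$-ISS Lyapunov function by Proposition \ref{prop:Linfty-ISS-Lyapunov-fun-and-L2-ISS-Lf}. Then \ref{thm:quad-coercivity-itm20} $\Rightarrow$ \ref{thm:quad-coercivity-itm0} is immediate (restrict to the undisturbed-plus-$L^2$ estimate), and \ref{thm:quad-coercivity-itm20} $\Rightarrow$ \ref{thm:quad-coercivity-itm1} is immediate (set $B=0$); also \ref{thm:quad-coercivity-itm0} $\Rightarrow$ \ref{thm:quad-coercivity-itm1} is trivial since an $L^2$-ISS Lyapunov function for $\Sigma(A,B)$ is in particular a Lyapunov function for $\Sigma(A,0)$. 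For \ref{thm:quad-coercivity-itm2} $\Rightarrow$ \ref{thm:quad-coercivity-itm32}: with $\langle\cdot,\cdot\rangle_{new}$ as given, write $\langle x,y\rangle_{new}=\langle Qx,y\rangle$ for a boundedly invertible positive $Q\in L(X)$, put $F=Q^{1/2}$, so $W(x)=\|Fx\|=\|x\|_{new}$ is an equivalent norm; then for $B=0$, $\dot W(x)^2 = \dot{(W^2)}(x) = 2\Re\langle Ax,x\rangle_{new}\le 0$ plus exponential stability upgrades this to $\dot W(x)\le -cW(x)$ for some $c>0$, and rescaling $F$ gives the constant $1$. Conversely \ref{thm:quad-coercivity-itm32} $\Rightarrow$ \ref{thm:quad-coercivity-itm2}: from $W(x)=\|Fx\|$ an equivalent norm, $P:=F^*F$ defines an equivalent scalar product $\langle x,y\rangle_{new}:=\langle Px,y\rangle$, and $\dot W(x)\le -W(x)$ forces $2\Re\langle APx,x\rangle\le$ (strictly negative), in particular $\Re\langle Ax,x\rangle_{new}\le 0$.

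The genuinely substantive step is closing the loop via \ref{thm:quad-coercivity-itm1} $\Rightarrow$ \ref{thm:quad-coercivity-itm2} (equivalently \ref{thm:quad-coercivity-itm0} $\Rightarrow$ \ref{thm:quad-coercivity-itm2}). Suppose $V(x)=\langle Px,x\rangle$ with $P\in L(X)$ self-adjoint, positive, boundedly invertible, is a coercive quadratic Lyapunov function for $\Sigma(A,0)$: so along $\dot x=Ax$ we have $\dot V_{0}(x)\le -\alpha_3(\|x\|)$ with $\alpha_3\in\mathcal K$, in particular $\frac{d}{dt}\langle PT(t)x,T(t)x\rangle\le 0$ for $x\in D(A)$, i.e. $2\Re\langle PAx,x\rangle\le 0$ for all $x\in D(A)$. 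Define the equivalent scalar product $\langle x,y\rangle_{new}:=\langle Px,y\rangle$; it is equivalent precisely because $P$ is positive and boundedly invertible (this is where coercivity is essential — this is why the theorem is about coercive Lyapunov functions). Then $\Re\langle Ax,x\rangle_{new}=\Re\langle PAx,x\rangle\le 0$ for all $x\in D(A)$, which is exactly \ref{thm:quad-coercivity-itm2}. The main obstacle I anticipate is not this algebra but the analytic bookkeeping in the forward direction: justifying that the Dini derivative $\dot V_u(x_0)$ of the quadratic form along the mild solution equals $2\Re\langle Ax_0,x_0\rangle_{new}+2\Re\langle x_0,Bu(0)\rangle_{new}$ (or the corresponding inequality) for $x_0\in D(A)$, and then propagating the resulting quadratic decay estimate from $D(A)$ to all $x_0\in X$ using only continuity of $V$ and boundedness of $B$ and of $T$ — essentially a repetition of the splitting already carried out in Proposition \ref{prop:quadratic-ISS-LFs-with-inputs-Banach}, which is why I would phrase that part as an application of it rather than redoing the estimates. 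Once the cycle \ref{thm:quad-coercivity-itm0} $\Rightarrow$ \ref{thm:quad-coercivity-itm1} $\Rightarrow$ \ref{thm:quad-coercivity-itm2} $\Leftrightarrow$ \ref{thm:quad-coercivity-itm3}, \ref{thm:quad-coercivity-itm2} $\Rightarrow$ \ref{thm:quad-coercivity-itm20} $\Rightarrow$ \ref{thm:quad-coercivity-itm0}, and \ref{thm:quad-coercivity-itm2} $\Leftrightarrow$ \ref{thm:quad-coercivity-itm32} is assembled, all six statements are equivalent.
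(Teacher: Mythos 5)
Your overall architecture is reasonable and the genuinely substantive direction you identify --- extracting \ref{thm:quad-coercivity-itm2} from \ref{thm:quad-coercivity-itm1} by setting $\langle x,y\rangle_{new}:=\langle Px,y\rangle$ and reading off $\Re\langle PAx,x\rangle\le 0$ from the decay of $V$ on $D(A)$ --- is exactly the paper's argument (the paper phrases it as \ref{thm:quad-coercivity-itm1} $\Rightarrow$ \ref{thm:quad-coercivity-itm20}). The equivalences \ref{thm:quad-coercivity-itm2} $\Leftrightarrow$ \ref{thm:quad-coercivity-itm3} via Lemma~\ref{lem:Similar-to-contraction-andequivalent-scalp} and \ref{thm:quad-coercivity-itm0} $\Leftrightarrow$ \ref{thm:quad-coercivity-itm1} via Proposition~\ref{prop:quadratic-ISS-LFs-with-inputs-Banach} are also as in the paper.

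However, your way of closing the loop, \ref{thm:quad-coercivity-itm2} $\Rightarrow$ \ref{thm:quad-coercivity-itm20} by taking $V(x)=\|x\|_{new}^2$ for the \emph{same} scalar product that witnesses \ref{thm:quad-coercivity-itm2}, has a genuine gap. Dissipativity only gives $\dot V_{u=0}(x)=2\Re\langle Ax,x\rangle_{new}\le 0$, whereas an ISS Lyapunov function needs the strict decay $\le -a\|x\|^2$, and exponential stability does not upgrade the pointwise inequality: for the nilpotent left shift on $L^2(0,1)$ (with $T(t)f=f(\cdot+t)$ extended by zero) one has $\Re\langle Af,f\rangle=-\tfrac12|f(0)|^2$, which vanishes on a dense set of $f\in D(A)$, so $\|\cdot\|^2$ is \emph{not} a Lyapunov function even though $A$ is dissipative and $T$ is exponentially stable. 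Your Young's-inequality step then only yields $\dot V_u(x)\le \varepsilon\|x\|^2+C_\varepsilon\|u(0)\|^2$, which is useless. The same defect infects your \ref{thm:quad-coercivity-itm2} $\Rightarrow$ \ref{thm:quad-coercivity-itm32} (``exponential stability upgrades $\dot W\le 0$ to $\dot W\le -cW$'' is unjustified, and note also $\frac{d}{dt}W^2=2W\dot W$, not $(\dot W)^2$). The paper closes the cycle differently: for \ref{thm:quad-coercivity-itm2} $\Rightarrow$ \ref{thm:quad-coercivity-itm1} it solves the Lyapunov operator inequality $\Re\langle Ax,\tilde Px\rangle_{new}\le -\|x\|_{new}^2$ (which exists by exponential stability but with $\tilde P$ possibly non-coercive) and then replaces $\tilde P$ by $P=\tilde P+\varepsilon I$; the dissipativity from \ref{thm:quad-coercivity-itm2} is precisely what guarantees that adding $\varepsilon I$ preserves the strict inequality while restoring coercivity. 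Statement \ref{thm:quad-coercivity-itm20} is then witnessed by the \emph{new} product $\langle Px,y\rangle_{new}$ built from this Lyapunov operator (so that $\|x\|^2_{new}$ \emph{is} the coercive Lyapunov function with strict decay), and \ref{thm:quad-coercivity-itm32} is derived from \ref{thm:quad-coercivity-itm1} via $W=\sqrt V$ using the quadratic decay rate supplied by Lemma~\ref{lem:quadratic}. With your routing, items \ref{thm:quad-coercivity-itm20} and \ref{thm:quad-coercivity-itm32} are not actually reached, so the proof as proposed is incomplete.
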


\begin{proof}
\ref{thm:quad-coercivity-itm0} $\Iff$ \ref{thm:quad-coercivity-itm1}. Follows by Proposition~\ref{prop:quadratic-ISS-LFs-with-inputs-Banach}.

\ref{thm:quad-coercivity-itm1} $\Rightarrow$ \ref{thm:quad-coercivity-itm20}. 
Assume that there exists a coercive, quadratic Lyapunov function $V:X\to \Rpn$. By Lemma \ref{lem:quadratic}, there exists a self-adjoint operator $P\in L(X)$ and constants $c_{1},c_{2}>0$ such that $V(x)=\langle Px,x\rangle$  and
\begin{equation}
\label{eq:Bounds-for-V-aux}
c_{1}\|x\|^{2}\leq \langle Px,x\rangle\leq c_{2}\|x\|^{2},\qquad x\in X.
\end{equation}
Thus, $\langle x,y\rangle_{new}:=\langle P^{\frac{1}{2}}x,P^{\frac{1}{2}}y\rangle$ defines a new scalar product with a norm $\|\cdot\|_{new}$ which is equivalent to $\|\cdot\|$. For any $x\in D(A)$, 
\begin{align*}
\frac{1}{h}\Big(V(T(h)x)-&V(x)\Big)={}\frac{1}{h}\Big(\langle T(h)x,Px\rangle-\langle x,Px\rangle\Big)\\{}&\to \langle Ax,Px\rangle = \langle Ax,x\rangle_{new} 
\ \text{ as } h\to 0^{+}.
\end{align*}
By the assumption that $V$ is a Lyapunov function, we conclude that $A$ is dissipative with respect to the new scalar product.

\ref{thm:quad-coercivity-itm20} $\Rightarrow$ \ref{thm:quad-coercivity-itm2}. Clear. 

\ref{thm:quad-coercivity-itm2} $\Rightarrow$ \ref{thm:quad-coercivity-itm1}. 
As $T$ is exponentially stable, and $\langle \cdot,\cdot\rangle_{new}$  is an equivalent scalar product, 
$T$ is also exponentially stable in the new norm $\|\cdot\|_{new}$ and thus there exists a positive operator 
$\tilde{P}\in L(X)$ satisfying the inequality 
\begin{eqnarray}
\Re\langle Ax,Px\rangle_{new} \leq -\langle x,x \rangle_{new},\qquad \forall x\in D(A)
\label{eq:Lyapunov-ineq-new-scalp}
\end{eqnarray}
with $P:=\tilde{P}$. In view of \eqref{eq:Dissipativity-equivalent-scalp}, for every $\varepsilon>0$ the operator $P=\tilde{P}+\varepsilon I$ also solves \eqref{eq:Lyapunov-ineq-new-scalp}.
Obviously, $V:x\mapsto \langle Px,x\rangle$,  $x\in X$ is a coercive Lyapunov function.

\ref{thm:quad-coercivity-itm2} $\Iff$ \ref{thm:quad-coercivity-itm3}. Follows by Lemma~\ref{lem:Similar-to-contraction-andequivalent-scalp}. 

\ref{thm:quad-coercivity-itm1} $\Iff$ \ref{thm:quad-coercivity-itm32}.
 Let $V(x) = \|Fx\|^2$ be a coercive quadratic Lyapunov function for $B=0$ with a certain $F\in L(X)$. Define $W(x):=\sqrt{V(x)}=\|Fx\|$, $x \in X$.
As $V$ is coercive and quadratic, $W$ is an equivalent norm, and furthermore, for certain $a>0$ and all $x \neq 0$ we have that 
\begin{eqnarray*}
\dot{W}(x) = \frac{1}{2 W(x)} \dot{V}(x) \le  - \frac{1}{2 W(x)} 2a V(x) =   -aW(x).
\end{eqnarray*}
The converse implication is analogous.
\end{proof}

\begin{remark}
\label{rem:Norms-as-LFs} 
By item (iii) of Proposition~\ref{prop:Lyapunov-criterion-for-ISS-bounded-operators}, for exponentially stable semigroups $T$, we can always find an equivalent norm, which is a Lyapunov function for an undisturbed system. However, it is in general not of the form $V(x)=\|Fx\|$ for a linear $F \in L(X)$. To have this additional property, we need to assume that the semigroup $T$ is similar to a contraction semigroup.
\end{remark}

Supported by Theorem~\ref{thm:Quad-coercive-nonanalytic-1}, we have the following negative result on the existence of coercive quadratic Lyapunov functions for exponentially stable systems.

\begin{proposition}
\label{prop1}
For any infinite-dimensional Hilbert space $X$, there exists a generator $A$ of an exponentially stable, analytic semigroup $T$ on $X$ such that the system $\Sigma(A,0)$ has no coercive, quadratic Lyapunov function.
\end{proposition}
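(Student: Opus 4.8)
The starting point is Theorem~\ref{thm:Quad-coercive-nonanalytic-1}: since $B=0\in L(U,X)$, the equivalence \ref{thm:quad-coercivity-itm1}~$\Iff$~\ref{thm:quad-coercivity-itm3} shows that $\Sigma(A,0)$ admits a coercive quadratic Lyapunov function if and only if the semigroup $T$ is similar to a contraction semigroup. Hence the claim is equivalent to the statement that \emph{on every infinite-dimensional Hilbert space $X$ there is a generator $A$ of an exponentially stable, analytic semigroup which is not similar to any contraction semigroup}, and this is what I would construct.

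First, I reduce to the separable case. Write $X=H\oplus X'$ with $H$ separable and infinite-dimensional, and suppose we have the desired generator $A_0$ on $H$. Set $A:=A_0\oplus(-I_{X'})$; then $A$ still generates an exponentially stable analytic semigroup. If $A$ were similar to a contraction semigroup, then by Lemma~\ref{lem:Similar-to-contraction-andequivalent-scalp} there is an equivalent scalar product $\langle\cdot,\cdot\rangle_{new}$ on $X$ with $\Re\langle Ax,x\rangle_{new}\le 0$ for $x\in D(A)$; restricting this form to $H$ gives an equivalent scalar product on $H$ for which $A_0$ is dissipative, hence $T_0$ would be similar to a contraction --- a contradiction. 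So it suffices to work on $H\cong\bigoplus_{n\in\N}\C^{d_n}$.

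On $H$ I would look for $A_0=\bigoplus_{n\in\N}A_n$ with finite-dimensional blocks $A_n$ on $\C^{d_n}$ such that: (a) the operators $-A_n$ are uniformly sectorial of some angle $<\pi/2$, so that $A_0=\bigoplus_n A_n$ is closed, densely defined and generates an analytic semigroup; (b) $\sigma(A_n)\subset\{\Re z\le-\delta\}$ for a fixed $\delta>0$, with a uniform bound $\|e^{tA_n}\|\le Me^{-\delta t}$, so that $A_0$ is exponentially stable; and, crucially, (c) the quantity $\nu(A_n):=\inf\{\,\|Q\|\,\|Q^{-1}\|\ :\ Q=Q^*>0,\ \Re\langle QA_nx,x\rangle\le 0\ \text{for all }x\in\C^{d_n}\,\}$ (which is finite since each $A_n$ is exponentially stable, e.g.\ via the Lyapunov equation) satisfies $\nu(A_n)\to\infty$. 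Given such blocks, $A_0$ cannot be similar to a contraction semigroup: by the same restriction argument, an equivalent scalar product on $H$ making $A_0$ dissipative would, upon restriction to the $n$-th summand $\C^{d_n}$, yield a positive matrix $Q_n$ solving $\Re\langle Q_nA_nx,x\rangle\le 0$ with $\|Q_n\|\,\|Q_n^{-1}\|$ bounded uniformly in $n$, contradicting (c). Assembling $A_0$ and then $A$, and reading off analyticity and exponential stability from (a)--(b), would finish the proof.

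The main obstacle is producing blocks satisfying (a), (b) and (c) simultaneously --- that is, finite-dimensional, uniformly sectorial operators with a uniform spectral gap but arbitrarily bad quadratic dissipativity constant $\nu$. This is precisely the phenomenon behind the fact that bounded analytic semigroups on Hilbert space need not be similar to contraction semigroups, and suitable blocks are known: one may take a rescaled/shifted version of the building blocks in the McIntosh--Yagi construction of a sectorial operator on a Hilbert space without a bounded $H^\infty$ functional calculus. Indeed, the generator of a contraction semigroup on a Hilbert space admits a bounded $H^\infty$ functional calculus on a half-plane (a consequence of the Sz.-Nagy--Foia\c{s}/von~Neumann inequality), so similarity to a contraction semigroup --- equivalently, (c) failing uniformly --- would force a uniform $H^\infty$-calculus bound over all blocks; choosing the blocks so that their $H^\infty$-calculus norms grow gives (c). Verifying the uniform sectoriality and uniform exponential decay of the rescaled blocks is then the remaining (routine but somewhat technical) task.
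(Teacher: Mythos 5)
Your proposal follows essentially the same route as the paper: both reduce, via Theorem~\ref{thm:Quad-coercive-nonanalytic-1}, to exhibiting an exponentially stable analytic semigroup that is not similar to a contraction semigroup, and both ultimately source that counterexample from the Le~Merdy / McIntosh--Yagi literature (the paper cites it directly; you sketch the block-diagonal scaffolding around it but still defer the key finite-dimensional blocks with $\nu(A_n)\to\infty$ to the same references). Your added details --- the reduction to the separable case by a direct sum with $-I_{X'}$, the compression argument for restricting an equivalent scalar product to a block, and the link between quadratic dissipativity and uniform $H^\infty$-calculus bounds --- are all correct and consistent with the paper's argument.
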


\begin{proof}
Assume that there exists a coercive, quadratic Lyapunov function $V:X\to \Rpn$. 
By Theorem~\ref{thm:Quad-coercive-nonanalytic-1}, it follows that $A$ generates a semigroup, similar to a contraction semigroup.
%
However, for any infinite-dimensional Hilbert space, it is possible to construct analytic, exponentially stable semigroups, which are not similar to a contraction semigroup, see \cite[Theorem 1.1]{LeMerdy98}, and \cite{mcintoshHinfNo}, as well as \cite[Chapter 9, Theorem 9.17]{Haase06}. 
\end{proof}

\begin{remark}
\label{rem:Existence of a quadratic ISS LF} 
\begin{enumerate}
\item
Note that if one drops in Proposition~\ref{prop1} the condition that the semigroup generated by $A$ is analytic, then examples of semigroups not similar to a contraction semigroup were known for a while, \cite{Fogu69,Che76}. The fact that there exist such examples even for analytic semigroups is more subtle. In particular, the latter examples are rather pathological in the sense that they do not emerge from  PDE examples.
\item
Combining Theorem~\ref{thm:Quad-coercive-nonanalytic-1} with Proposition~\ref{prop:Lyapunov-criterion-for-ISS-bounded-operators}, we obtain another negative result: \emph{Existence of a coercive ISS Lyapunov function for $\Sigma(A,B)$ does not imply existence of a coercive quadratic ISS Lyapunov function for $\Sigma(A,B)$ (in contrast to the finite-dimensional linear case)}.
\end{enumerate}
\end{remark}


Recall that for a generator $A$ of an exponentially stable analytic semigroup, the operator $-A$ is sectorial (of angle less than $\pi/2$), see \cite{Haase06}. Thus, the fractional power $(-A)^{{-\alpha}}$, $\alpha\in(0,1)$ can be defined as a bounded operator via a contour integral of an operator-valued analytic function; this is an instance of the Riesz--Dunford functional calculus for sectorial operators. It can be shown that this operator is injective since $A$ is injective and hence $(-A)^{\alpha}$ can be defined as the inverse. The operator $(-A)^{\alpha}$ is closed and  densely defined. For more information on this construction, see the books \cite{Haase06}  and  \cite[Section 1.4]{Hen81} and, for a very brief description of the essentials required here, \cite{Sch20}.

The following result provides an alternative construction of a coercive quadratic Lyapunov function for exponentially stable \emph{analytic} systems with bounded $B$. It is based on a well-known characterization of analytic semigroups being similar to a contraction semigroup, due to Callier--Grabowski--Le~Merdy  \cite[Theorem 7.3.7]{Haase06}, see also the comments in \cite[Section 7.5]{Haase06}. Recall that if $A$ generates an analytic semigroup $T$, then for any $x\in X$, $\beta:t\mapsto \|AT(t)x\|$ is dominated by a constant times $t^{-1}$.  However, if the semigroup is exponentially stable and $x\in D(A)$, then $\beta$ is integrable. 

\begin{theorem}
\label{thm1} 
Let $X$ be a Hilbert space, $B\in L(U,X)$, and let $A$ generate an exponentially stable analytic semigroup $T$ on $X$. The conditions \ref{thm:quad-coercivity-itm0}--\ref{thm:quad-coercivity-itm32} in Theorem~\ref{thm:Quad-coercive-nonanalytic-1} are equivalent to 
\begin{enumerate}[label=(\roman*), start = 7]
\item\label{thm:quad-coercivity-itm4} The function
\begin{equation}
\label{eq:LFV}
V:D(A)\to \Rpn,\quad  x\mapsto \int_{0}^{\infty}\|(-A)^{\frac{1}{2}}T(t)x\|^{2}\,\mathrm{d}t
\end{equation}
extends to a coercive, quadratic $L^2$-ISS Lyapunov function from $X$ to $\Rpn$ for the system $\Sigma(A,B)$. We denote this extension again by $V$. 
\end{enumerate}
\end{theorem}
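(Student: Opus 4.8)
The plan is to add condition (vii) to the chain of Theorem~\ref{thm:Quad-coercive-nonanalytic-1} by establishing (vii)$\Rightarrow$(ii) and (v)$\Rightarrow$(vii). The first implication is immediate: if the extension $V$ is a coercive quadratic ISS Lyapunov function for $\Sigma(A,B)$, then specialising its defining inequalities to $u\equiv 0$ shows that $V$ is a coercive quadratic Lyapunov function for $\Sigma(A,0)$, which is (ii).

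For (v)$\Rightarrow$(vii) I would first record that $V$ is finite on $D(A)$: since $D(A)\subset D((-A)^{1/2})$ and $(-A)^{1/2}T(t)x=T(t)(-A)^{1/2}x$, exponential stability gives $\|(-A)^{1/2}T(t)x\|\le M\mathrm{e}^{-\omega t}\|(-A)^{1/2}x\|$, which is square-integrable. By Lemma~\ref{lem:Similar-to-contraction-andequivalent-scalp} one may, after passing to an equivalent scalar product — which alters $V$ and $\|\cdot\|$ only by multiplicative constants and leaves the operator $(-A)^{1/2}$ unchanged — assume that $T$ is a contraction semigroup, so that $-A$ is m-accretive and, being the generator of an analytic semigroup with $0\in\rho(A)$, sectorial of angle $<\pi/2$. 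The crucial step is then the square function estimate
\[
\int_0^\infty\|(-A)^{1/2}T(t)x\|^2\,\mathrm{d}t\ \asymp\ \|x\|^2,\qquad x\in D(A),
\]
for such semigroups: its upper half is the infinite-time admissibility of the observation operator $(-A)^{1/2}$, which for contraction semigroups follows from the resolvent bound $\sup_{\Re s>0}(\Re s)^{1/2}\|(-A)^{1/2}(sI-A)^{-1}\|<\infty$ (a consequence of sectoriality of angle $<\pi/2$ together with $0\in\rho(A)$); its lower half is the matching quadratic estimate, valid for semigroups similar to contractions. Granting this, the bounded operators $Q_{\varepsilon,N}:=\int_\varepsilon^N[(-A)^{1/2}T(t)]^{\ast}[(-A)^{1/2}T(t)]\,\mathrm{d}t$ are uniformly bounded by the upper half, hence converge in the weak operator topology to a self-adjoint $P\in L(X)$; polarisation shows that $V$ extends to $X$ with $V(x)=\langle Px,x\rangle$, and the lower half forces $P$ to be boundedly invertible, so $V$ is coercive and quadratic. (For self-adjoint $A$ this gives $V=\frac12\|\cdot\|^2$, consistent with the abstract.)

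It then remains to check the Lyapunov inequalities. For $h>0$ one has $V(T(h)x)=\int_h^\infty\|(-A)^{1/2}T(t)x\|^2\,\mathrm{d}t$, so for every $x\in X$
\[
V(T(h)x)-V(x)=-\int_0^h\|(-A)^{1/2}T(t)x\|^2\,\mathrm{d}t\ \le\ -c_0\int_0^h\|T(t)x\|^2\,\mathrm{d}t,
\]
where $c_0:=\|(-A)^{-1/2}\|^{-2}>0$ (finite since $0\in\rho(A)$); dividing by $h$ and letting $h\to 0^+$, continuity of $t\mapsto T(t)x$ gives $\dot V_0(x)\le -c_0\|x\|^2$. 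Hence $V$ is a coercive quadratic Lyapunov function for $\Sigma(A,0)$ with quadratic decay rate, and since $B\in L(U,X)$ satisfies \eqref{eq:Requirement-on-B}, the triangle-inequality computation from the proof of Proposition~\ref{prop:quadratic-ISS-LFs-with-inputs-Banach} — now tracking $\limsup_{t\to 0^+}\|u\|_{L^\infty(0,t)}$ in place of $\|u(0)\|$ — combined with Young's inequality promotes $V$ to a coercive quadratic ISS Lyapunov function for $\Sigma(A,B)$, which is (vii).

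The hard part is the square function estimate above, and specifically its lower bound: the upper bound is the soft, standard admissibility statement for contraction semigroups, but the lower bound is the non-trivial half of McIntosh-type quadratic estimates, is exactly where the hypothesis "$T$ similar to a contraction semigroup" is genuinely used, and must be imported from the functional-calculus literature rather than deduced from exponential stability alone — the naive bound $V(x)\ge c_0\int_0^\infty\|T(t)x\|^2\,\mathrm{d}t$ being too weak, since $\int_0^\infty\|T(t)x\|^2\,\mathrm{d}t$ need not be comparable to $\|x\|^2$.
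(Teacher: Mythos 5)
Your proposal is correct and follows essentially the same route as the paper: both reduce the claim to the two-sided square function estimate $\int_0^\infty\|(-A)^{\frac{1}{2}}T(t)x\|^2\,\mathrm{d}t\asymp\|x\|^2$ for analytic semigroups similar to contractions, imported from the $H^\infty$-calculus literature (the paper cites \cite[Theorems 7.3.1 and 7.3.7]{Haase06} with $f(z)=z^{\frac{1}{2}}\mathrm{e}^{-z}$), then extend $V$ by density using the resulting two-sided bound, obtain the decay rate from the tail-integral identity $V(T(h)x)-V(x)=-\int_0^h\|(-A)^{\frac{1}{2}}T(t)x\|^2\,\mathrm{d}t$ together with bounded invertibility of $(-A)^{\frac{1}{2}}$, and absorb the bounded input term as in Proposition~\ref{prop:quadratic-ISS-LFs-with-inputs-Banach}. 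The one imprecision is your attribution of the upper half of the estimate to a generic ``resolvent condition implies admissibility for contraction semigroups'' principle, which is not a theorem for operator-valued observation; the correct source is that contractivity plus analyticity yields a bounded sectorial $H^\infty$ calculus and hence both halves of the quadratic estimate (Le Merdy/McIntosh), which is precisely what the paper's citation of \cite{Haase06} supplies, so the substance of your argument is unaffected.
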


%

\begin{proof} 
The implication \ref{thm:quad-coercivity-itm4} $\Rightarrow$ \ref{thm:quad-coercivity-itm1} is clear,  whereas
\ref{thm:quad-coercivity-itm3} $\Rightarrow$ \ref{thm:quad-coercivity-itm4} is a consequence of a more general result for possibly unbounded operators $B$,  Corollary \ref{thm:Coercive-LFs-for-admissible-operators} below.
\end{proof}

\begin{remark}
\label{rem:LF-self-adjoint-case} \begin{enumerate}
\item
The function $V$ defined in Theorem \ref{thm1} is a special instance of more general {\emph{square function estimates} (or quadratic estimates)} originating from harmonic analysis and omnipresent in the holomorphic functional calculus, see \cite{Haase06}. 
\item
The Lyapunov function in \eqref{eq:LFV} takes a particularly simple form in case if $A$ is a self-adjoint operator. 
Using that $A$ and hence $T(t)$  and $(-A)^{\frac{1}{2}}$ are self-adjoint, 
\begin{eqnarray*}
V(x) = \scalp{\int_0^{+\infty}(-A)T(2t)x dt}{x} =-\frac{1}{2}\scalp{\int_0^{+\infty}\frac{d}{dt}(T(t)x) dt}{x}=\frac{1}{2}\scalp{x}{x},  \\
\end{eqnarray*}
where $x\in D(A)$ and we used that the semigroup is exponentially stable. 
\end{enumerate}
\end{remark}

\section{(Non-)coercive quadratic ISS Lyapunov functions for analytic systems on Hilbert spaces}
\label{sec:Non-coercive quadratic ISS Lyapunov functions for analytic systems}

By Proposition~\ref{prop:L2-ISS-Lyapunov-suff-cond} and Proposition~\ref{thm:ISS-Criterion-lin-sys-with-unbounded-operators},
the existence of a quadratic $L^2$-ISS Lyapunov function implies 2-admissibility of $B$, for any system $\Sigma(A,B)$. 

In this section, we establish the converse results for analytic semigroups that are similar to contraction semigroups. 
 We recall the definition of the interpolation space $X_{\alpha}$, $\alpha\in (0,1)$ given through the fractional power $(\lambda-A)^{-\alpha}$, for $\lambda$ in the resolvent set of $A$, acting as an isometric isomorphism between $X$ and {\color{blue}$D((-A)^{\alpha})$ equipped with its graph norm}. Analogously, using the same construction for the extended semigroup on $X_{-1}$ with generator $A_{-1}$, the space $X_{\alpha}$ is defined for $\alpha\in(-1,0)$. For details, see, e.g.,\ \cite{EnN00}.

The following lemma collects some known sufficient conditions for the admissibility of $B$, which will be helpful on this way.
\begin{lemma}
\label{lem:Sufficient-condition-for-admissibility} 
Let $A$ be an analytic semigroup over a Hilbert space $X$ \fsc{, $B\in L(U,X_{-1})$ and $\beta\in[0,1)$.} Then 
\begin{enumerate}[label=(\roman*)]
 \item \fsc{$B \in L(U,X_{-\gamma})$ for all  $\beta<\gamma$ if and only if $B$ is $q$-admissible for all $q \in (\frac{1}{1-\beta},+\infty]$.}

 \item If $A$ is self-adjoint, and $B \in L(U,X_{-\frac{1}{2}})$, then $B$ is 2-admissible.
 \item
  If $A$ is self-adjoint, and $B$ is 2-admissible, then $B$ does \fsc{in general not satisfy} $B \in L(U,X_{-\frac{1}{2}})$.
 \item \amc{If $B$ is $q$-admissible for some $q<2$, then $B\in L(U,X_{-\gamma})$ for some $\gamma<\frac{1}{2}$.}
\end{enumerate}
\end{lemma}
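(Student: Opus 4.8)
The plan is to handle the four items essentially independently, after one common reduction: since $q$-admissibility only concerns finite time horizons and the spaces $X_{-\gamma}$ are, up to equivalence of norms, insensitive to exponentially bounded shifts $A\rightsquigarrow A-\omega I$, I would assume throughout that $T$ is in addition exponentially stable, so that the bounded fractional powers $(-A)^{-\alpha}$, the moment formula $(-A)^{-\gamma}=\tfrac1{\Gamma(\gamma)}\int_0^\infty t^{\gamma-1}T(t)\,\mathrm{d}t$ and the analytic smoothing bound $\|(-A)^{\alpha}T(\tau)\|\lesssim\tau^{-\alpha}$ ($0<\tau\le1$) are all available \cite{Haase06}. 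For \emph{(i)} I would set $\alpha=\tfrac12-p$ and factor $B=(-A_{-1})^{\alpha}\widehat B$ with $\widehat B=(-A)^{-\alpha}B$; here $\widehat B\in L(U,X)$, which is merely a reformulation of the hypothesis $B\in L(U,X_{-1/2+p})$. Then
\[
\int_{0}^{t}T_{-1}(t-s)Bu(s)\,\mathrm{d}s=\int_{0}^{t}(-A)^{\alpha}T(t-s)\,\widehat Bu(s)\,\mathrm{d}s ,
\]
and Hölder's inequality in $s$, using $\|(-A)^{\alpha}T(\tau)\|\lesssim\tau^{-\alpha}$, bounds the $X$-norm of this absolutely convergent Bochner integral by a constant times $\|\tau\mapsto\tau^{-\alpha}\|_{L^{q'}(0,t)}\,\|u\|_{L^{q}(0,t;U)}$, which is finite precisely when $q'\alpha<1$, i.e.\ $q>\tfrac2{1+2p}$ (for $q=\infty$ take $q'=1$, $\alpha<1$); the case $p>0$ then includes $q=2$. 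For \emph{(ii)} the same factorisation with $\alpha=\tfrac12$ and the pointwise bound $\tau^{-1/2}$ only recovers the borderline case excluded in (i); the self-adjointness supplies the missing $L^{2}$-integrability in $\tau$ via Remark~\ref{rem:LF-self-adjoint-case}, which says $\int_0^\infty\|(-A)^{1/2}T(\tau)x\|^2\,\mathrm{d}\tau=\tfrac12\|x\|^2$, i.e.\ $(-A)^{1/2}$ is an exactly $L^2$-admissible observation operator for the self-adjoint $T$. By the duality between admissible observation and control operators — concretely, after regularising $v$, testing $\int_0^tT_{-1}(t-s)(-A)^{1/2}v(s)\,\mathrm{d}s$ against $x\in D((-A)^{1/2})$ and using $T^{*}=T$ to turn the pairing into $\int_0^t\langle v(s),(-A)^{1/2}T(t-s)x\rangle\,\mathrm{d}s$ — one obtains that $(-A)^{1/2}\in L(X,X_{-1/2})$ is a $2$-admissible control operator, and composing with $\widehat B=(-A)^{-1/2}B\in L(U,X)$ yields $2$-admissibility of $B$.

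For \emph{(iii)} I would give a counterexample. Take $X=\ell^{2}$, $A=\diag(-k)_{k\ge1}$, so $A=A^{*}$ generates the exponentially stable analytic semigroup $T(t)=\diag(\mathrm{e}^{-kt})$ and $X_{-1/2}=\{x:\sum_kk^{-1}|x_k|^2<\infty\}$, $U=\C$, and let $b=(1,1,1,\dots)$. Then $b\in X_{-1}$ but $b\notin X_{-1/2}$ since $\sum_kk^{-1}=\infty$; nonetheless $b$ is $2$-admissible, because for $u\in L^2(0,t)$ the $k$-th coordinate of $\int_0^tT_{-1}(t-s)b\,u(s)\,\mathrm{d}s$ equals $(\mathcal{L}f)(k)$ with $f(\tau)=u(t-\tau)\mathbf1_{[0,t]}(\tau)$, where by Paley--Wiener $\mathcal{L}f$ lies in the Hardy space $H^{2}$ of the right half-plane with $\|\mathcal{L}f\|_{H^{2}}\asymp\|u\|_{L^{2}(0,t)}$, and the counting measure of $\{k\ge1\}$ is a Carleson measure ($\#\{k:0<k<h\}\le h$), so the Carleson embedding theorem gives $\sum_k|(\mathcal{L}f)(k)|^2\lesssim\|u\|_{L^{2}(0,t)}^{2}$ uniformly in $t$. (Alternatively one cites the Carleson-measure characterisation of $L^{2}$-admissibility for self-adjoint diagonal semigroups.)

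For \emph{(iv)} I would start from the hypothesis $\|\int_0^tT_{-1}(t-s)Bu(s)\,\mathrm{d}s\|_X\le C\|u\|_{L^{q}(0,t;U)}$ with $q<2$ and test it on the concentrating inputs $u_n:=n\mathbf1_{[t-1/n,t]}\xi$ ($\xi\in U$); since $\tfrac1h\int_0^hT_{-1}(\sigma)\,\mathrm{d}\sigma\to I$ strongly on $X_{-1}$, this yields, with $R_h:=\int_0^hT_{-1}(\sigma)\,\mathrm{d}\sigma$, the estimate $\|R_hB\xi\|_X\lesssim h^{1/q}\|\xi\|_U$ for $h\in(0,1]$ (that $R_hB\xi\in X$ comes from combining analytic smoothing on $[\varepsilon,h]$ with the estimate on $[0,\varepsilon]$). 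Then I would fix $\gamma\in(\tfrac1{q'},\tfrac12)$ — possible since $q<2$ forces $\tfrac1{q'}<\tfrac12$ — and use $(-A)^{-\gamma}B\xi=\tfrac1{\Gamma(\gamma)}\int_0^\infty t^{\gamma-1}T_{-1}(t)B\xi\,\mathrm{d}t$, splitting at $t=1$: on $[1,\infty)$ the integral converges in $X$ because $T_{-1}(1)B\xi\in X$ by analyticity and $\|T(t-1)\|$ decays exponentially, while on $[0,1]$ an integration by parts against $R_tB\xi$ leaves a boundary term at $0$ of size $t^{\gamma-1}\|R_tB\xi\|_X\lesssim t^{\gamma-1+1/q}\to0$ and a remaining integral $\lesssim\int_0^1t^{\gamma-2+1/q}\,\mathrm{d}t$, both finite exactly because $\gamma>1/q'$. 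Hence $(-A)^{-\gamma}B\xi\in X$ with norm $\lesssim\|\xi\|_U$, i.e.\ $B\in L(U,X_{-\gamma})$ with $\gamma<\tfrac12$.

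The mapping estimate (i) is routine; the real work lies in (ii) and (iv) (and in fixing references for (iii)). The main obstacles I anticipate are: making the observation/control duality of (ii) rigorous at the level of the extrapolation space $X_{-1/2}$, which requires the standard density regularisation of the inputs; carrying out the Stieltjes integration by parts of (iv) for Banach-space-valued integrals together with the preparatory estimate $\|R_hB\xi\|_X\lesssim h^{1/q}\|\xi\|_U$ extracted from concentrating inputs; and, for (iii), checking that the Carleson/Paley--Wiener bound really does yield finite-time (not merely infinite-time) $L^2$-admissibility for the chosen diagonal example, or locating the precise reference.
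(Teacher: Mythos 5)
Your proposal is correct, but it is organised quite differently from the paper's proof, which is almost entirely by citation: item (i) is quoted from \cite[Proposition 2.13]{Sch20}, item (ii) from \cite[Proposition 5.1.3]{TuW09} via duality of admissible control and observation operators, item (iv) from \cite[Prop.~19]{MaraghBounit14} (described as folklore), and item (iii) is settled by the explicit diagonal example $A=\diag(-2^{n})_{n}$, $Bc=(2^{n/2})_{n}c$, whose $2$-admissibility is checked with the Laplace--Carleson embedding criterion of \cite{JPP14} while $(-A)^{-1/2}B=(1,1,\dots)\notin\ell^{2}$. What you do instead is reconstruct the standard arguments behind these citations: the H\"older/smoothing estimate $\|(-A)^{\alpha}T(\tau)\|\lesssim\tau^{-\alpha}$ with the (correct) bookkeeping $q'\alpha<1\Leftrightarrow q>\tfrac{2}{1+2p}$ for (i); the exact square-function identity $\int_{0}^{\infty}\|(-A)^{1/2}T(\tau)x\|^{2}\,\mathrm{d}\tau=\tfrac12\|x\|^{2}$ plus observation--control duality for the borderline case (ii); and, for (iv), the Favard-type estimate $\|\int_{0}^{h}T_{-1}(\sigma)B\xi\,\mathrm{d}\sigma\|\lesssim h^{1/q}\|\xi\|$ followed by the moment formula for $(-A)^{-\gamma}$ and integration by parts, with $\gamma\in(1/q',1/2)$ available precisely because $q<2$. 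Your counterexample for (iii) ($\lambda_{k}=k$, $b_{k}=1$) differs in detail from the paper's geometric one, but both reduce to verifying that the associated point measure is Carleson, and both work; your concern about finite- versus infinite-time admissibility there is moot since the semigroup is exponentially stable, so the two notions coincide. The gain of your route is a self-contained proof with explicit exponents; the cost is having to justify the vector-valued integration by parts in (iv) and the regularisation in the duality step of (ii), which the cited references already package. The initial reduction to exponentially stable $T$ is legitimate, as both $q$-admissibility and membership in $X_{-\gamma}$ are unaffected by the shift $A\mapsto A-\omega I$.
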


\begin{proof}
Assertion (i) can e.g.\ be found in  \cite[Theorem 1]{Preussler23}, while
 (ii) is immediate from \cite[Proposition 5.1.3]{TuW09} by duality of admissible control and observation operators. 

(iii): The claim follows from the following counterexample, which is a direct adaptation of \cite[Example~5.3.11]{TuW09} by duality. 
Consider the state space $X = \ell_2$ of square summable real sequences, a diagonal operator $A = (-2^n)_{n\in\Z_+}$, and an input operator 
$B:\C\to X$, given by $Bc = (2^{n/2})_{n\in\Z_+}c$.
Then 
\[
(-A)^{-\frac{1}{2}}B = (1,1,1,\ldots),
\]
which is not a well-defined operator from $\C$ to $X$.

At the same time, $A$ is 2-admissible by an application of a Laplace--Carleson measure criterion, \cite[Thm.~3.2]{JPP14}. 
However, $(-A)^{-p}B = (2^{(\frac{1}{2} - p)n})_{n\in\Z_+}$ is a bounded operator from $\C$ to $X$ for $p>\frac{1}{2}$.

\amc{
(iv): If $B$ is $q$-admissible for some $q<2$, then 
$B$ is $r$-admissible for all $r \in (\frac{1}{1-\beta},+\infty]$ with some $\beta <\frac{1}{2}$, and by 
(i), we have that $B\in L(U,X_{-\gamma})$ for some $\gamma<\frac{1}{2}$.

Conversely, if $B\in L(U,X_{-\gamma})$ for some $\gamma<\frac{1}{2}$, then 
$B\in L(U,X_{-s})$ for all $\gamma<s$, and by (i), $B$ is $q$-admissible for some $q<2$.
}
%
\end{proof}

\fsc{We point out that further  conditions for assessing admissibility can  be found in \cite{Haak10,JPP14,Preussler23}.}
Recall the following elementary fact for analytic semigroups $(T(t))_{t\ge0}$ generated by $A$ that are exponentially stable.  As $T(t)$ maps $X$ into $D(A)$ for any $t>0$,  $(-A)^{q}T(t)$ is a well-defined bounded operator on $X$, where $(-A)^{q}$ is defined as the fractional power of sectorial operators.  Furthermore, if $(T(t))_{t\ge0}$ is exponentially stable in addition, then for any $q>0$ there are $M,\delta>0$ such that 
\begin{eqnarray}
\big\|(-A)^{q}T(t)\big\| \leq M t^{-q}e^{-\delta t},\quad t>0.
\label{eq:Fractional-powers-analytic-semigroup}
\end{eqnarray}
For details of these well-known facts, we refer to, e.g.\ \cite{EnN00,Haase06}.

\begin{remark}
We recall the following fact about mild solutions related to analytic semigroups. If $u\in C^{2}(\Rpn,U)$ and $x_{0}\in D(A)$, then the mild solution \eqref{eq:mild}  is classical on $(0,\infty)$ and $x\in C^{1}((0,\infty);X)\cap C([0,\infty);X)$ with $A_{-1}x(s)+Bu(s)\in X$ for all $s>0$. This can be seen by combining \cite[Remark 4.2.9]{TuW09} with the fact that the mild solution can be written as \[x(t)=\tilde{x}(t)+T(t)A^{-1}Bu(0)-A^{-1}Bu(0)\] with $\tilde{x}(t)=T(t)x_{0}+\int_{0}^{t}T(t-s)B[u(s)-u(0)]\mathrm{d}s$ and noting that $T(\cdot)A^{-1}Bu(0)-A^{-1}Bu(0)\in C^{1}((0,\infty);X)\cap C([0,\infty);X)$ by the analyticity. 
\end{remark}

We now obtain a family of non-coercive Lyapunov functions.
This result shows that there is a transition from the non-coercive case ($q=0$) to the coercive case ($q=1/2$ and under the additional assumption that $X$ is a Hilbert space and the semigroup is similar to a contraction). 

\begin{theorem}
\label{thm:Non-Coercive-L2-ISS-LFs} 
Let $X$ be a Hilbert space, and let $A$ generate an exponentially stable analytic semigroup $T$ on $X$.
 \fsc{Furthermore, let $\gamma,q\in[0,1)\times[0,\frac{1}{2})$ with $\gamma+q<1$. Then for every $B\in L(U,X_{-\gamma})$, }
 
\begin{equation}
\label{eq:LFV-non-coercive-special}
W_{q}(x) = \int_{0}^{\infty}\|(-A)^{q}T(t)x\|^{2}\,\mathrm{d}t,\quad x \in X,
\end{equation}
is a non-coercive $L^2$-ISS Lyapunov function for \eqref{eq:Linear_System}. \\
\fsc{If, additionally, $W_{\frac{1}{2}}(x)<\infty$ for all $x\in X$, then, then the assertion also holds for $q=\frac{1}{2}$ and $\gamma<\frac{1}{2}$.}
\end{theorem}

\begin{proof} 
As $T$ is analytic and exponentially stable, by \eqref{eq:Fractional-powers-analytic-semigroup}, the integral in \eqref{eq:LFV-non-coercive-special} converges for $q<\frac{1}{2}$, i.e., $W_{q}$ is well-defined, and 
\[
W_{q}(x)\lesssim \|x\|^{2},\quad x\in X.
\]
\fsc{In case that $W_{\frac{1}{2}}(x)<\infty$ for all $x\in X$, the above inequality for $q=\frac{1}{2}$ follows by a standard closed graph argument. Since $(-A)^{q}$ is invertible, strong continuity of the semigroup implies that $W_q(x) >0$ for $x\neq 0$ and since $(W_{q})^{\frac{1}{2}}$ is a bounded linear operator by the above, $W_q$ is continuous. }

Consider  $q\leq \frac{1}{2}$ and let $x_{0}\in D(A)$ and $u\in C^{2}(\Rpn;U)$. Consider the mild solution \eqref{eq:mild}, which is classical on $(0,\infty)$ and $x\in C^{1}((0,\infty);X)\cap C([0,\infty);X)$ with $A_{-1}x(s)+Bu(s)\in X$ for all $s>0$.  Then for   $s>0$
 \begin{align}
  \frac{\mathrm{d}}{\mathrm{d}s} W_{q}(x(s)) ={}&2\Re \int_{0}^{\infty}\langle (-A)^qT(t)\dot{x}(s),(-A)^qT(t)x(s)\rangle\,\mathrm{d}t \notag\\
  ={}&2\Re \int_{0}^{\infty}\langle AT(t)(-A)^qx(s),T(t)(-A)^q x(s)\rangle\,\mathrm{d}t \nonumber\\
  &\quad  +2\Re  \int_{0}^{\infty}\langle T(t)(-A)^qBu(s), (-A)^qT(t)x(s)\rangle\,\mathrm{d}t \nonumber\\
  \leq{}& -2\|(-A)^qx(s)\|^{2}+2 \int_{0}^{\infty}\|T(t)(-A)^qBu(s)\| \| (-A)^qT(t)x(s)\| \mathrm{d}t  \notag\\
  \leq{}& -\fsc{2\|(-A)^qx(s)\|^{2} + C \|u(s)\| \|(-A)^{q}x(s)\|}  \notag\\
  \leq{}& \fsc{(-2+\varepsilon)\|(-A)^{-q}\|^{-2} \|x(s)\|^{2} }+  \frac{1}{4\varepsilon}C^{2}\|u(s)\|^{2},\notag 
 \end{align}
 where $\varepsilon>0$ and \amc{$C:=2 \int_{0}^{\infty}\|T(t)\| \|(-A)^{q+\gamma}T(t)\|\|(-A)^{-\gamma}B\|_{L(U,X)} \mathrm{d}t$,}
\fsc{which is finite by \eqref{eq:Fractional-powers-analytic-semigroup} and the assumption on $B$.}
Choosing $\varepsilon=1$ and integrating yields
 \begin{equation}\label{eq:intdiss}
 W_{q}(x(h))-W_{q}(x(\tau))\leq \int_{\tau}^{h}-\|(-A)^{-q}\|^{-2}\|x(s)\|^{2}+ 4C^{2}\|u(s)\|^{2} \mathrm{d}s, \quad 0<\tau<h.
\end{equation}
By Lemma \ref{lem:Sufficient-condition-for-admissibility}, $B$ is $\max\{r,2\}$-admissible for  $r>\frac{1}{1-(\gamma+q)}$ since $B\in L(U,X_{-(\gamma+q)})$. By well-known facts on admissible control operators, \amc{\cite[Proposition 2.3]{Wei89b},  $x(\cdot)$ is continuous}, whence the inequality extends to $\tau=0$. On the other hand, $x(h)$ depends continuously on $x_0 \in X$  and $u$  with respect to the norms $X$ and $L^{\max\{r,2\}}(\Rpn,U)$. Thus, the inequality extends to $(x_{0},u)\in X\times C(\Rpn,U)$
(and even to all $X\times L^{\max\{r,2\}}(\Rpn,U)$). The dissipation inequality now follows after \amc{setting $\tau:=0$ in \eqref{eq:intdiss},} dividing by $h$, and letting $h\to0^{+}$, see  \cite[Cor. A.5.45]{CuZ20}.
\end{proof}

If, in the setting of Theorem \ref{thm:Non-Coercive-L2-ISS-LFs}, the semigroup is similar to a Hilbert space contraction semigroup, then $W_{\frac{1}{2}}$ is even coercive. 
Indeed, the fact that $W_{\frac{1}{2}}$ is well-defined on $X$ and that there exist $a_{1},a_{2}>0$ such that \eqref{eq:quadratic-sandwich} holds 
follows directly from Callier--Grabowski--Le~Merdy's theorem, see \cite[Theorem 7.3.1]{Haase06} (with $f(z)=z^{\frac{1}{2}}\mathrm{e}^{-z}$) together with \cite[Theorem 7.3.7]{Haase06}, and the references therein.
\begin{corollary}
\label{thm:Coercive-LFs-for-admissible-operators} 
Let $X$ be a Hilbert space and let $A$ generate an exponentially stable analytic semigroup $T$ on $X$, which is similar to a contraction semigroup and let $B \in L(U,X_{-\gamma})$ for some $\gamma<\frac{1}{2}$. Then $W_{\frac{1}{2}}:X\to[0,\infty)$ from \eqref{eq:LFV-non-coercive-special}
defines a coercive quadratic $L^2$-ISS Lyapunov function for \eqref{eq:Linear_System}.
\end{corollary}

{\fsc{Our results can be used to construct ISS Lyapunov functions for non-selfadjoint generators as the following example class of strongly damped wave equations show subject to viscous damping or Kelvin--Voigt damping, arising in elastic systems. Note that such systems have been intensively studied in the past, see e.g. \cite{ChenRussel82,ChenLiuLiu99,ChenTriggiani89,ChenTriggiani90,LasieckaTriggianiBook91}.
\begin{example}[Strongly damped 1D-wave equation with structural damping]
Consider the following wave equation on the spatial domain $\Omega=[0,1]$
\begin{align}
 w_{tt}(t,\xi)={}&w_{\xi\xi}(t,x)+C(w_{t}(t,x))+\delta_{\frac{1}{2}}(t)u(t), &(t,\xi)\in(0,\infty)\times \Omega, \label{eq:dampedwave1}\\
 w(t,0)={}&w(t,1)=0,&t\in(0,\infty), \label{eq:dampedwave2}\\
 w(0,\xi)={}&w_{0}(\xi),\quad w_{t}(0,\xi)=w_{1}(x),&\xi\in\Omega,
\end{align}
with a viscous damping term governed by the operator $C=A_{0}^{\alpha}:=(A_{0})^{\alpha}:D(A_{0}^{\alpha})\to L^{2}(\Omega)$, $\alpha\in[\frac{1}{2},1]$ and in-domain point-control at $x=\frac{1}{2}\in\Omega$. Here $A_{0}$ refers to the negative Dirichlet Laplacian  on $L^{2}(\Omega)$, whence $C$ is well-defined as the fractional power of a positive operator. Note aside that the extremal cases $\alpha\in\{\frac{1}{2},1\}$ refer to  $C=\partial_{\xi}$ and $C=\partial_{\xi\xi}$. Equations \eqref{eq:dampedwave1}-\eqref{eq:dampedwave2} can be phrased as a first-order system of the form \eqref{eq:Linear_System} on the state space $X=H_{0}^{1}(\Omega)\times L^{2}(\Omega)$ with
\begin{equation}
A=\begin{bmatrix}0&I\\-A_{0}&-A_{0}^{\alpha}\end{bmatrix}, \quad B=\begin{bmatrix}0\\\delta_{\frac{1}{2}}\end{bmatrix},
\end{equation}
with $D(A)=D(A_{0}^{\frac{1}{2}})\times D(A_{0}^{\alpha}) =H_{0}^{1}(\Omega)\times D(A_{0}^{\alpha})$. It is well-known that $A$ generates a strongly continuous contraction semigroup, which is even analytic and exponentially stable \cite[Thm.~1.1]{ChenTriggiani89} (also note that if $\alpha$ were in $[0,\frac{1}{2})$, then the analyticity is lost). Furthermore, by \cite[Thm.~1.1]{ChenTriggiani90}, we have for all $(\theta;\alpha)\in([0,\frac{1}{2}]\times[\frac{1}{2},1])\cup([0,1]\times\{\frac{1}{2}\})$ that 
\[D((-A)^{\theta})=D(A_{0}^{\frac{1}{2}+\theta(1-\alpha)})\times D(A_{0}^{\alpha\theta}),\]
and $D((-A^{*})^{\theta})=D((-A)^{\theta})$, where the latter is a consequence of the fact that $A_{0}$ has compact resolvent, see \cite[Remark 5.1]{ChenTriggiani90} and also \cite[Lemma A.1(vi)]{ChenTriggiani89}.
This implies that $B:\mathbb{C}\to X_{-\theta} \cong D((-A^{*})^{\theta})'$ is bounded provided that $\delta_{\frac{1}{2}}$ is an element of $D(A_{0}^{\alpha\theta})'$,  where the duality is to understood with respect to the pivot space $X$. Now, since $D(A_{0}^{\alpha\theta})$ is subspace of the Sobolev space $H^{2\alpha\theta}(\Omega)$, we conclude that if $2\alpha\theta>\frac{1}{2}$, then  indeed $\delta_{\frac{1}{2}}\in D(A_{0}^{\alpha\theta})'$, where we used that the spatial domain is one-dimensional. Thus,  we have that $B\in L(\mathbb{C},X_{-\theta})$ for  $\frac{1}{4\alpha}<\theta\leq1$. We also refer to \cite[Chapter 6]{LasieckaTriggianiBook91}, where several similar examples of this kind are treated. \newline
Hence, by Theorem \ref{thm:Non-Coercive-L2-ISS-LFs}, $W_{q}$ defines an $L^{2}$-ISS Lyapunov function for every $q$ satisfying $0\leq q\leq\min\{\frac{1}{2},1-\frac{1}{4\alpha}\}$ provided that $\alpha\in(\frac{1}{2},1]$. In the case $\alpha=\frac{1}{2}$, this holds for $q\in [0,\frac{1}{2})$. In particular, for $\alpha>\frac{1}{2}$, $W_{\frac{1}{2}}$ defines a coercive $L^{2}$-ISS Lyapunov function for the $(A,B)$. \\
Variants of this example on spatial dimensions $2$ and $3$ can be given with the (more restrictive) ranges for the parameters by simple modifications of the above reasoning.
\end{example}
}

If the semigroup (generator) is even self-adjoint, then the Lyapunov function  \eqref{eq:LFV-non-coercive-special} takes the simple form
\begin{equation}\label{eq:exprLF}
 W_{q}(x)=\frac{1}{2}\langle x,(-A)^{2q-1}x\rangle=\frac{1}{2}\|(-A)^{q-\frac{1}{2}}x\|^{2},\qquad x\in X,
\end{equation}
which is consistent with  Remark \ref{rem:LF-self-adjoint-case}. In this case, 
we get an ISS Lyapunov function also in the limiting case $p+q=1$, under an additional admissibility condition.

\begin{theorem}
\label{prop:Coercive-LF-Self-adjoint} 
Let $A=A^*$ be a self-adjoint operator, generating an exponentially stable semigroup $T$ and 
let $B \in L(U,X_{-1+q})$ be such that $(-A)^{q}B$ is $C$-admissible with $q\in[0,\frac{1}{2}]$. Then the function $W_{q}$ from  \eqref{eq:LFV-non-coercive-special}, see also \eqref{eq:exprLF}, is a quadratic $L^{2}$-ISS non-coercive Lyapunov function for \eqref{eq:Linear_System}. It is even coercive if $q=\frac{1}{2}$, in which case  $W_{\frac{1}{2}}(x)=\frac{1}{2}\|x\|^{2}$.\\
\fsc{In particular, if $\dim U<\infty$, the assumption that $(-A)^{q}B$ is $C$-admissible is automatically satisfied.}
\end{theorem}
\begin{proof}
The function $W_{q}$ is well-defined by Theorem \ref{thm:Non-Coercive-L2-ISS-LFs} and given by \eqref{eq:exprLF}. First, assume that  $x\in D(A)$ and $u$ is smooth. By self-adjointness, we get
\begin{align*}
  \frac{\mathrm{d}}{\mathrm{d}s} W_{0}(x(s)) ={}&\langle \dot{x}(s),(-A)^{2q-1}x(s)\rangle\\
            ={}&\langle Ax(s),(-A)^{2q-1}x(s)\rangle+\langle Bu(s),(-A)^{2q-1}x(s)\rangle\\
            ={}&-\|(-A)^{q}x(s)\|^{2}+\langle (-A)^{q-1}Bu(s),(-A)^{q}x(s)\rangle \\ 
            \leq{}&-\frac{1}{2}\|(-A)^{q}x(s)\|^{2}+4\|(-A)^{q-1}B\|^{2}\|u(s)\|^{2} \\
            \leq{}&-\frac{1}{2}\|(-A)^{-q}\|^{-2}\|x(s)\|^{2}+4\|(-A)^{q-1}B\|^{2}\|u(s)\|^{2}.
\end{align*}

Note that $(-A)^{q-1}B$ is a bounded operator by the assumption that $(-A)^{q}B$ is admissible.
By analogous  reasoning as in the proof of Theorem \ref{thm:Non-Coercive-L2-ISS-LFs}, we derive the dissipation inequality. Indeed, $C$-admissibility of $(-A)^{q}B$ is sufficient to extend the derived inequality to all $x_{0}\in X$ and $u\in C(\Rpn,U)$.  The coercivity of special case $q=\frac{1}{2}$ trivially follows from $W_{\frac{1}{2}}(x)=\frac{1}{2}\|x\|^{2}$.

\fsc{If $\operatorname{dim}U<\infty$ and since $A$, as a self-adjoint operator, generates a contraction semigroup, it follows by \cite[Theorem 7]{JSZ17} that all operators from $U$ to $X_{-1}$ are $C$-admissible. Therefore,  in particular, $(-A)^{q}B\in L(U,X_{-1})$ is $C$-admissible.}
\end{proof}

\begin{remark}
 The case $q=0$ is contained in a result in \cite[Theorem 5.3 and Proposition 6.1]{JMP20} which has a more technical proof. Indeed, in \cite[Proposition 6.1]{JMP20} the ISS Lyapunov function $V(x)=-\frac{1}{2}\langle A^{-1}x,x\rangle$ is considered.  By
 \begin{equation*}
  -\frac{1}{2}\langle A^{-1}x,x\rangle=\Big\langle \int_{0}^{\infty}T(2t)x\mathrm{d}t,x\Big\rangle=\int_{0}^{\infty}\langle T(t)x,T(t)x\rangle\mathrm{d}t=W_{0}(x),
 \end{equation*}
 where we used that $A$ is self-adjoint, and that the semigroup is exponentially stable, we arrive at the same Lyapunov function. Aside, we note that also the more general result \cite[Theorem 5.3]{JMP20} for not necessarily selfadjoint operators $A$ can be proved along similar lines as in the proof of  Theorem \ref{prop:Coercive-LF-Self-adjoint}, see the following Theorem \ref{thm:jmp} below. Note that in \cite{JMP20},  $\infty$-admissible operators $B$ are considered, but since the considered state spaces are reflexive, $C$-admissibility and $\infty$-admissibility are equivalent, see e.g.\ \cite{arora2024}.



\end{remark}

\fsc{\begin{example}[Heat equation with Dirichlet Control]\label{ex:heat}
Let $\Omega\subset \R^{n}$, $n\in\N$, be a domain with smooth boundary $\partial \Omega$. Consider the classical heat equation with Dirichlet boundary control
\begin{align*}
 \dot{x}(\xi,t)={}&\Delta x(\xi,t),&(\xi,t)\in\Omega\times (0,\infty),\\
 x(\xi,t)={}&u(\xi,t),&(\xi,t)\in\partial \Omega\times (0,\infty),\\
 x(\xi,0)={}&x_{0}(\xi), &\xi\in\Omega.
\end{align*}
It is well-known that this problem can be written in the form \eqref{eq:Linear_System} with
$X=L^{2}(\Omega)$, $U=L^{2}(\partial\Omega)$, $A=\Delta$ defined on  $D(A)=H^{2}(\Omega)\cap H_{0}^{1}(\Omega)$ and some $B:U\to X_{-1}$ for which $B\in L(U,X_{-\gamma})$ for any $\gamma>\frac{3}{4}$, see e.g.\ \cite[Example 2.16]{Sch20} or \cite{LasieckaTriggianiBook}. Thus, by Theorem \ref{prop:Coercive-LF-Self-adjoint}, $W_{q}$, given by \eqref{eq:exprLF}, is a non-coercive $L^{2}$-ISS Lyapunov function for every $q<\frac{1}{4}$. To see this, note that $(-A)^{q}B\in L(U,X_{-1+\frac{1}{4}-q})$ implies that $(-A)^{q}B$ is $C$-admissible in this case. Further note that $W_{q}(x)=\|(-A)^{q}x\|^{2}$ can further be represented by the functional calculus of self-adjoint operators, e.g.\ as by writing $(-A)^{q}$ as a multiplication operator.
\end{example}}
The following result, in a slightly different formulation,  was essentially first proved in \cite{JMP20}. We present a short alternative proof.
\begin{theorem}\label{thm:jmp}
Let $A$ generate an exponentially stable semigroup on a Hilbert space such that the following is satisfied.
Assume that there exists a self-adjoint, bounded operator $\tilde{P}$ such that 
 \begin{enumerate}
 \item $\langle \tilde{P}x,x\rangle>0$ for all $x\in X\setminus\{0\}$,
 \item $2\Re\langle Ax,\tilde{P}x\rangle \leq -\|x\|^{2}$ for all $x\in D(A)$,
 \item 
 $\tilde{P}A:D(A)\to X$ extends to a bounded operator on $X$.
 \end{enumerate}
 Then the function 
 \[V:X\to \R, \qquad x\mapsto  \langle \tilde{P}x,x\rangle\]
  is a non-coercive Lyapunov function for $\Sigma(A,0)$ and for any $x\in X,\epsilon>0$, $u\in C^{1}([0,\infty);U)$, it holds that
  \begin{equation} \label{eq1:thm14}
  \dot{V}_{u}(x) \leq  (\epsilon-1) \|x\|^{2} + \tfrac{1}{4\epsilon}\|\tilde{P}A\|^{2}\|B\|_{ L(U,X_{-1})}^{2}\|u(0)\|^{2}.
  \end{equation}
 If, in addition, the operator $B$ is $C$-admissible, then $V$ is an $L^{2}$-ISS Lyapunov function.  
\end{theorem}
\begin{remark}
Note that the conditions on $A$ formulated in \cite{JMP20} imply the ones used in the above theorem. To see this, let $P$ satisfy the assumptions from \cite[Thm.~4.2]{JMP20} and set $\tilde{P}:=\Re P=\frac{1}{2}(P+P^{*})$. Since $PA$ extends to a bounded operator on $X$ and $\operatorname{im}P\subset D(A^{*})$ by assumption, we conclude, by the closed graph theorem, that $A^{*}P$ and thus $P^{*}A$ extend to bounded operators. Hence $A^{*}\tilde{P}$ extends to a bounded operator on $X$. The second assumption for $\tilde{P}$ follows directly from the property that 
\[ \Re\langle (A^{*}P+PA)x,x\rangle\leq-\|x\|^{2},\qquad x\in D(A),\] assumed in \cite{JMP20}. Furthermore, we point out that in \cite[Thm.~4.2]{JMP20} a stronger notion of ISS Lyapunov functions is used, allowing for inputs in $L^{\infty}(0,t;U)$. 
\end{remark}
\begin{proof}
First, we show that \eqref{eq1:thm14} holds. Recall that for $x_{0}\in D(A)$ and $u\in C^{1}(\R_{\ge0};U)$ the mild solution is indeed a classical solution, and we have that
for all $t>0$, 
\begin{eqnarray*}
\frac{\mathrm{d}}{\mathrm{d}t}V(x(t))& = & 2\Re\langle Ax(t)+Bu(t),\tilde{P}x(t)\rangle \\
      &= & 2\Re\langle Ax(t),\tilde{P}x(t)\rangle +  2\Re\langle Bu(t),\tilde{P}x(t)\rangle \\
      &\leq & -\|x(t)\|^{2} + 2\Re\langle \tilde{P}AA^{-1}Bu(t),x(t)\rangle.
\end{eqnarray*}
Now Young's inequality implies \eqref{eq1:thm14} for any $\varepsilon>0$. Taking $\varepsilon \in (0,1)$ and integrating yields
\[V(x(t))-V(x_{0})\lesssim -\int_{0}^{t}\|x(s)\|^{2}ds+\int_{0}^{t}\|u(s)\|^{2}\mathrm{d}s.\]
If $B$ is $C$-admissible, then this inequality generalizes to all $x_{0}\in X$ and $u\in C(0,t;U)$ since $x(t)$ depends continuously on $x_{0}$ and $u$. Therefore, $V$ is an $L^2$-ISS Lyapunov function.
\end{proof}

\section{Graphic summary and closing remarks}

In Figure~\ref{fig:Overview-of-results}, we describe the relationships between the main concepts used in the paper.

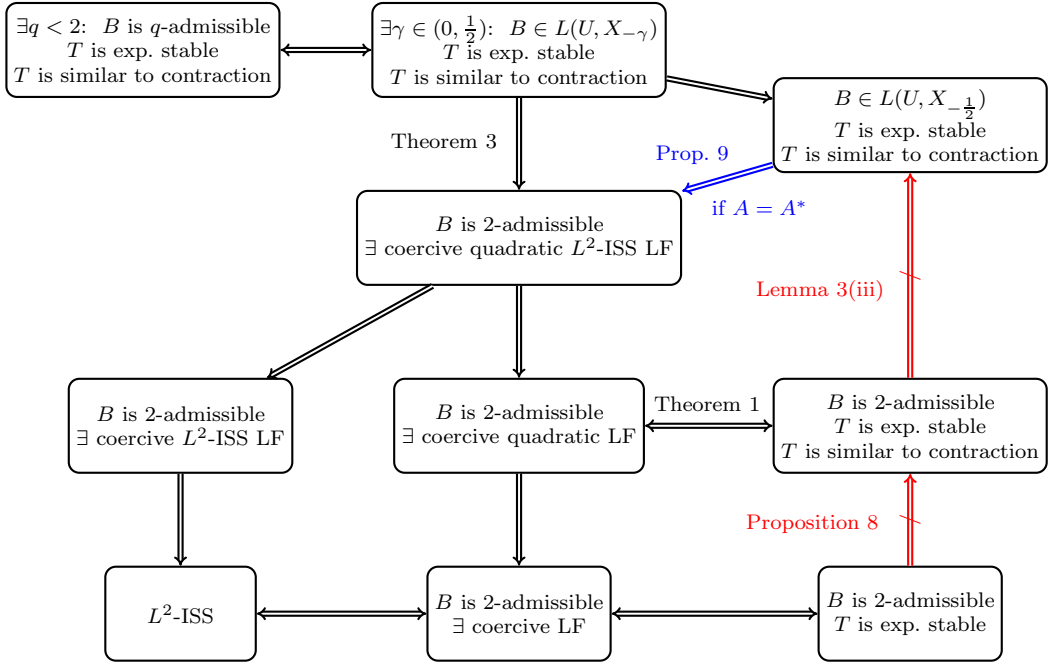
\begin{figure*}%
\usetikzlibrary{arrows}
\usetikzlibrary{graphs,decorations.pathmorphing,decorations.markings}
\usetikzlibrary{calc}

\pgfmathsetmacro\weight{1/2}
\pgfmathsetmacro\third{1/3}
\pgfmathsetmacro\twothirds{2/3}

\tikzset{degil/.style={
          decoration={markings,
          mark= at position 0.5 with {
                \node[transform shape] (tempnode) {$/$};
                 }
             },
             postaction={decorate}
}
}

\newcommand \qrq   {\quad\Rightarrow\quad}
\newcommand \qiq   {\quad\Iff\quad}

\newcommand \A   {\mathcal{A}}
\newcommand \OO   {\mathcal{O}}
\newcommand \B   {\mathcal{B}}
\newcommand \K   {\mathcal{K}}
\newcommand \Kinf{\mathcal{K_\infty}}
\newcommand \Linf  {\mathcal{L_\infty}}
\newcommand \KL  {\mathcal{KL}}
\newcommand \KKL  {\mathcal{KKL}}
\newcommand \LL  {\mathcal{L}}
\newcommand{\Knn}{\ensuremath{(\mathcal{K}_\infty\cup\{0\})^{n\times n}}}
\newcommand \PD   {\mathcal{P}}

\definecolor{GOcolor}{rgb}{0.858, 0.188, 0.478}
\definecolor{MPcolor}{rgb}{0, 0, 1}
\definecolor{HScolor}{rgb}{0.188, 0.858,  0.478}

\small
\centering
\begin{tikzpicture}[>=implies,thick, minimum height=1.25cm, minimum width=2cm]

\node (Adm-and_Lminp) at (-2.9,10.3) {Lem.~\ref{lem:Sufficient-condition-for-admissibility}};

\node[draw, rounded corners, align=center] (AnSuff-2) at (-5.5,10) {$\exists q<2$: \ $B$ is $q$-admissible\\ $T$ is exp. stable\\ $T$ is similar to contraction sg.};

\node[draw, rounded corners, align=center] (AnSuff) at (-0.2,10) {$\exists \gamma\in (0,\frac{1}{2})$: \ $B \in L(U,X_{-\gamma})$\\ $T$ is exp. stable\\ $T$ is similar to contraction sg.};

\node[draw, rounded corners, align=center] (Self-AdSuff) at (5,9) {$B \in L(U,X_{-\frac{1}{2}})$\\ $T$ is exp. stable\\ $T$ is similar to contraction sg.};

\node[draw, rounded corners, align=center] (quad-ISS-LF) at (-0.2,7.5) {$B$ is 2-admissible\\ $\exists$ coercive quadratic $L^2$-ISS LF};

\node[draw, rounded corners, align=center] (nonquad-ISS-LF) at (-5,5) {$B$ is 2-admissible\\ $\exists$ coercive $L^2$-ISS LF};

\node[draw, rounded corners, align=center] (quad-LF) at (-0.2,5) {$B$ is 2-admissible\\ $\exists$ coercive quadratic  LF};

\node[draw, rounded corners, align=center] (quad-LF-2) at (5,5) {$B$ is 2-admissible\\$T$ is exp. stable\\ $T$ is similar to contraction sg.};

\node[draw, rounded corners, align=center] (L2-ISS) at (-5,2.5) {$L^2$-ISS};

\node[draw, rounded corners, align=center] (nonquad-LF) at (-0.2,2.5) {$B$ is 2-admissible\\ $\exists$ coercive LF};

\node[draw, rounded corners, align=center] (L2-ISS-crit) at (5,2.5) {$B$ is 2-admissible\\ $T$ is exp. stable};

\draw[thick,double equal sign distance,<->] (AnSuff) to (AnSuff-2);
\draw[thick,double equal sign distance,->] (AnSuff) to (Self-AdSuff);
\draw[thick,double equal sign distance,->] (AnSuff) to (quad-ISS-LF);
\draw[color = blue, thick,double equal sign distance,->] (Self-AdSuff) to (quad-ISS-LF);
\node[color=blue] (Self-Ad) at (2.7,7.9) {if $A=A^*$};
\node (Self-Ad) at (2.2,5.3) {Thm.~\ref{thm:Quad-coercive-nonanalytic-1}};
\node[color=blue] (Self-Ad) at (1.6,8.6) {Proposition~\ref{prop:Coercive-LF-Self-adjoint}};
\node (Self-Ad) at (-1.0,8.8) {Theorem~\ref{thm:Coercive-LFs-for-admissible-operators}};
\draw[thick,double equal sign distance,->] (quad-ISS-LF) to (quad-LF);
\draw[thick,double equal sign distance,->] (quad-ISS-LF) to (nonquad-ISS-LF);
\draw[thick,double equal sign distance,<->] (quad-LF-2) to (quad-LF);

\draw[thick,double equal sign distance,->] (nonquad-ISS-LF) to (L2-ISS);

\draw[thick,double equal sign distance,->] (quad-LF) to (nonquad-LF);
\draw[thick,double equal sign distance,<->] (L2-ISS-crit) to (nonquad-LF);
\draw[thick,double equal sign distance,<->] (nonquad-LF) to (L2-ISS);

\draw[thick,double equal sign distance,->,degil,red] (L2-ISS-crit) to (quad-LF-2);
\draw[thick,double equal sign distance,->,degil,red] (quad-LF-2) to (Self-AdSuff);
\node[color=red] (TuW09) at (3.8,6.8) {Lemma~\ref{lem:Sufficient-condition-for-admissibility}(iii)};

\node[color=red] (TuW09) at (3.8,3.7) {Proposition~\ref{prop1}};

\end{tikzpicture}
\caption{$L^2$-ISS, $L^2$-ISS Lyapunov functions, and admissibility for analytic linear systems}%
\label{fig:Overview-of-results}%
\end{figure*}

In this work, we have shown that  quadratic Lyapunov functions are a natural Lyapunov function concept to study the 
$L^2$-ISS of linear analytic systems in Hilbert spaces, \amc{in the sense that the such Lyapunov functions exist and certify $L^2$-ISS for broad class of systems of this type.}
Under the assumption that $A$ generates an analytic semigroup that is similar to a contraction, and that $B\in L(U,X_{-\frac{1}{2}+\varepsilon})$ for $\varepsilon>0$, we give an explicit construction of a coercive $L^2$-ISS Lyapunov function for such a system, relying on well-known techniques from holomorphic functional calculus. If $A$ is self-adjoint, then this function is just the common Lyapunov function $V(x)=\|x\|^{2}$, which even remains coercive under the weaker assumption
that $B\in L(U,X_{-\frac{1}{2}})$. 
Note that if $B$ is not 2-admissible, then no \amc{coercive} quadratic $L^p$-ISS Lyapunov function can be constructed for this system. 

One of the interesting open problems is whether the heat equation with Dirichlet boundary input (which is an ISS system)\fsc{, see also Example \ref{ex:heat},} possesses a coercive ISS Lyapunov function. Although this question is outside of the scope of this paper, our results do shed some light on this problem. 
In particular, it is well-known that the Dirichlet input operator for the heat equation is not 2-admissible, and in fact, it is only $p$-admissible for $p>4$, see e.g.\ \cite{JPP14} and \cite{Preussler23}. This implies that there is no coercive $L^2$-ISS Lyapunov function for this system, and Proposition~\ref{prop:Restrictions-quadratic-LFs} indicates that there is also no coercive quadratic $L^p$-ISS Lyapunov function for any finite $p$. Thus, if there is a coercive ISS Lyapunov function for this system, then it is very likely not  quadratic.
Recall, that Neumann input operator $B$ for the heat equation is $\frac{4}{3}$-admissible, see e.g.\ \cite[Ex.~2.14]{Sch20}, \cite{Preussler23} or \cite[Thm.~3.2]{JPP14} and, in particular, $B\in L(U,X_{-\frac{1}{2}})$, see also Lemma~\ref{lem:Sufficient-condition-for-admissibility} above. Proposition~\ref{prop:Coercive-LF-Self-adjoint} thus
shows that $x\mapsto \|x\|^2$ is an $L^2$-ISS Lyapunov function for the heat equation with the Neumann boundary input, which is known and was also verified directly in \cite{ZhZ18} and \cite{Sch20}.

\section{Declarations}


\noindent
\textbf{Ethical Approval.} 
Not applicable.\\[-7pt]

\noindent
\textbf{Funding.} 
A.~Mironchenko is supported by the Heisenberg program of the German Research Foundation (DFG) via the grant MI 1886/3-1.\\[-7pt]

\noindent
\textbf{Authors' contributions.} 
The paper was collaboratively and equally written by both coauthors.
The order of authors is alphabetical.\\[-7pt]

\noindent
\textbf{Competing interests.} 
Both authors have no competing interests.\\[-7pt]

\noindent
\textbf{Availability of data and materials.} 
Data sharing is not applicable to this article as no datasets were generated or analysed during
the current study.\\[-7pt]

\bibliographystyle{abbrv}
\bibliography{MyPublications,Mir_LitList_NoMir}

@article{Haak10,
 author = {Haak, Bernhard H.},
 title = {On the {Carleson} measure criterion in linear systems theory},
 fjournal = {Complex Analysis and Operator Theory},
 journal = {Complex Anal. Oper. Theory},
 volume = {4},
 number = {2},
 pages = {281--299},
 year = {2010}
}

@book{LasieckaTriggianiBook,
 author = {Lasiecka, Irena and Triggiani, Roberto},
 title = {Control Theory for Partial Differential Equations: {C}ontinuous and Approximation Theories. 1: {Abstract} Parabolic Systems},
 fseries = {Encyclopedia of Mathematics and Its Applications},
  publisher={Cambridge University Press},
 issn = {0953-4806},
 isbn = {978-0-521-15567-0},
 year = {2010}
}

@article{ChenLiuLiu99,
 author = {Chen, Shuping and Liu, Kangsheng and Liu, Zhuangyi},
 title = {Spectrum and stability for elastic systems with global or local {Kelvin}-{Voigt} damping},
 journal = {SIAM J. Appl. Math.},
 volume = {59},
 number = {2},
 pages = {651--668},
 year = {1999}
}

@article{ChenRussel82,
 author = {Chen, G. and Russell, D. L.},
 title = {A mathematical model for linear elastic systems with structural damping},
 fjournal = {Quarterly of Applied Mathematics},
 journal = {Q. Appl. Math.},
 volume = {39},
 pages = {433--454},
 year = {1982}
}

@article{ChenTriggiani89,
 author = {Chen, Shuping and Triggiani, Roberto},
 title = {Proof of extensions of two conjectures on structural damping for elastic systems},
 journal = {Pac. J. Math.},
 volume = {136},
 number = {1},
 pages = {15--55},
 year = {1989}
 }

@article{ChenTriggiani90,
 author = {Chen, Shuping and Triggiani, Roberto},
 title = {Characterization of domains of fractional powers of certain operators arising in elastic systems, and applications},
 journal = {J. Differ. Equations},
 volume = {88},
 number = {2},
 pages = {279--293},
 year = {1990}
}

@article{OlT25,
  title={Similarity to contraction semigroups: {S}tructural properties, criteria, and applications to control theory},
  author={Oliva-Maza, J and Tomilov, Y},
  journal={arXiv preprint arXiv:2509.01007},
  year={2025}
}

@article{OlT25b,
  title={On similarity to contraction semigroups and tensor products, {I}},
  author={Oliva-Maza, J and Tomilov, Y},
  journal={arXiv preprint arXiv:2509.01005},
  year={2025}
}

@article{ZhZ25,
  title={Generalized {L}yapunov functionals for the input-to-state stability of infinite-dimensional systems},
  author={Zheng, Jun and Zhu, Guchuan},
  journal={Automatica},
  volume={172},
  pages={112005},
  year={2025},
  publisher={Elsevier}
}

@article{Mar26,
  title={Coercive {ISS}-{L}yapunov functionals for regular infinite-dimensional systems and applications},
  author={Marx, Swann},
  journal={Mathematics of Control, Signals, and Systems},
  volume={38},
  number={1},
  pages={205--239},
  year={2026},
  publisher={Springer}
}

@article{ZhZ20,
  title={Input-to-State Stability for a Class of One-Dimensional Nonlinear Parabolic {PDE}s with Nonlinear Boundary Conditions},
  author={Zheng, Jun and Zhu, Guchuan},
  journal={SIAM Journal on Control and Optimization},
  volume={58},
  number={4},
  pages={2567--2587},
  year={2020},
  publisher={SIAM}
}

@book{CuZ20,
  title={Introduction to Infinite-Dimensional Systems Theory: A State-Space Approach},
  author={Curtain, Ruth and Zwart, Hans},
  year={2020},
  publisher={Springer}
}

@incollection{mcintoshHinfNo,
	Address = {Canberra},
	Author = {McIntosh, A.\ and Yagi, A.},
	Booktitle = {Miniconference on {O}perators in {A}nalysis ({S}ydney, 1989)},
	Date-Added = {2017-08-08 06:45:07 +0000},
	Date-Modified = {2017-08-08 06:45:07 +0000},
	Mrclass = {47A60 (47A10)},
	Mrnumber = {1060121 (91e:47013)},
	Mrreviewer = {J{\"o}rg Eschmeier},
	Pages = {159--172},
	Publisher = {Austral. Nat. Univ.},
	Series = {Proc. Centre Math. Anal. Austral. Nat. Univ.},
	Title = {Operators of type {$\omega$} without a bounded {$H_\infty$} functional calculus},
	Volume = {24},
	Year = {1990}}

@book{Haase06,
	Address = {Basel},
	Author = {Haase, Markus},
	Date-Added = {2016-11-15 16:57:37 +0000},
	Date-Modified = {2019-07-17 00:39:12 +0200},
	Pages = {xiv+392},
	Publisher = {Birkh\"auser Verlag},
	Title = {The {F}unctional {C}alculus for {S}ectorial {O}perators},
	Year = {2006}
}

@article{Fogu69,
	Author = {Packel, Edward W.},
	Date-Added = {2020-02-04 03:53:06 +0100},
	Date-Modified = {2020-02-04 03:53:25 +0100},
	Fjournal = {Proceedings of the American Mathematical Society},
	Issn = {0002-9939},
	Journal = {Proc. Amer. Math. Soc.},
	Mrclass = {47.50},
	Mrnumber = {238124},
	Mrreviewer = {G. Da-Prato},
	Pages = {240--244},
	Title = {A semigroup analogue of {F}oguel's counterexample},
	Volume = {21},
	Year = {1969},
	Bdsk-Url-1 = {https://mathscinet.ams.org/mathscinet-getitem?mr=238124}}

@article{NeT04,
  title={Input-to-state stability of networked control systems},
  author={Ne{\v{s}}i{\'c}, Dragan and Teel, Andrew R},
  journal={Automatica},
  volume={40},
  number={12},
  pages={2121--2128},
  year={2004},
  publisher={Elsevier}
}

@article{Tab07,
  title={Event-triggered real-time scheduling of stabilizing control tasks},
  author={Tabuada, Paulo},
  journal={IEEE Transactions on Automatic Control},
  volume={52},
  number={9},
  pages={1680--1685},
  year={2007},
  publisher={IEEE}
}

@article{LiH05,
  title={Stabilization of nonlinear systems with limited information feedback},
  author={Liberzon, Daniel and Hespanha, Jo{\~a}o P},
  journal={IEEE Transactions on Automatic Control},
  volume={50},
  number={6},
  pages={910--915},
  year={2005},
  publisher={IEEE}
}

@incollection{Sch20,
  title={Input-to-state stability for parabolic boundary control: linear and semilinear systems},
  author={Schwenninger, Felix L},
  booktitle={Control Theory of Infinite-Dimensional Systems},
  pages={83--116},
  year={2020},
  publisher={Springer}
}

@incollection{TPT18,
  title={Stabilization of Linear Hyperbolic Systems of Balance Laws with Measurement Errors},
  author={Tanwani, Aneel and Prieur, Christophe and Tarbouriech, Sophie},
  booktitle={Control Subject to Computational and Communication Constraints},
  pages={357--374},
  year={2018},
  publisher={Springer}
}

@book{KaK19,
  title={Input-to-State Stability for {PDE}s},
  author={Karafyllis, Iasson and Krstic, Miroslav},
  isbn={9783319910116},
  publisher={Springer},
	year={2019}
}

@article{ZhZ19,
	title={A {D}e {G}iorgi Iteration-based Approach for the Establishment of {ISS} Properties for {B}urgers' Equation with Boundary and In-domain Disturbances}, 
  author={Zheng, Jun and Zhu, Guchuan},
  journal={IEEE Transactions on Automatic Control},
  volume={64},
  number={8},
  pages={3476--3483},
  year={2018},
  publisher={IEEE}
}

@article{JSZ17,
  title={On continuity of solutions for parabolic control systems and input-to-state stability},
  author={Jacob, Birgit and Schwenninger, Felix L and Zwart, Hans},
  journal={Journal of Differential Equations},
  volume={266},
  issue={10},
  pages={6284--6306},
  year={2019},
  publisher={Elsevier}
}

@article{ZhZ18,
  title={Input-to-state stability with respect to boundary disturbances for a class of semi-linear parabolic equations},
  author={Zheng, Jun and Zhu, Guchuan},
  journal={Automatica},
  volume={97},
  pages={271--277},
  year={2018},
  publisher={Elsevier}
}

@article{JNP18,
author = {Birgit Jacob and Robert Nabiullin and Jonathan R. Partington and Felix L. Schwenninger},
title = {Infinite-dimensional input-to-state stability and {O}rlicz spaces},
journal = {SIAM Journal on Control and Optimization},
volume = {56},
number = {2},
pages = {868-889},
year = {2018}
}

@article{Che76,
  title={Two counterexamples in semigroup theory on {H}ilbert space},
  author={Chernoff, Paul R},
  journal={Proceedings of the American Mathematical Society},
  volume={56},
  number={1},
  pages={253--255},
  year={1976}
}

@article{AnP09,
  title={A unifying point of view on output feedback designs for global asymptotic stabilization},
  author={Andrieu, Vincent and Praly, Laurent},
  journal={Automatica},
  volume={45},
  number={8},
  pages={1789--1798},
  year={2009},
  publisher={Elsevier}
}

@ARTICLE{DRW10,
  author = {Dashkovskiy, Sergey and R\"{u}ffer, Bj\"{o}rn and Wirth, Fabian},
  title = {{Small gain theorems for large scale systems and construction of
	ISS Lyapunov functions}},
  journal = {SIAM Journal on Control and Optimization},
  year = {2010},
  volume = {48},
  pages = {4089--4118},
  number = {6},
  citeulike-article-id = {9804749},
  citeulike-linkout-0 = {http://dx.doi.org/10.1137/090746483},
  issn = {03630129},
  keywords = {iss, lyapunov\_methods, small-gain\_theorems},
  posted-at = {2011-09-25 16:36:42},
  priority = {0}
}

@BOOK{EnN00,
  title = {One-Parameter Semigroups for Linear Evolution Equations},
  publisher = {Springer},
  year = {2000},
  author = {Engel, Klaus-Jochen and Nagel, Rainer},
  address = {New York},
  citeulike-article-id = {12147824},
  keywords = {book, *file-import-13-03-13},
  mrnumber = {1721989 (2000i:47075)},
  posted-at = {2013-03-13 13:35:01},
  priority = {2}
}

@BOOK{FrK08,
  title = {Robust Nonlinear Control Design: State-Space and Lyapunov Techniques},
  publisher = {Birkh\"{a}user},
  year = {2008},
  author = {Freeman, R. A. and Kokotovic, P. V.},
  address = {Boston, MA},
  citeulike-article-id = {12187196},
  citeulike-linkout-0 = {http://books.google.de/books?id=\_eTb4Yl0SOEC},
  keywords = {book, lyapunov\_methods, nonlinear\_control, robust\_stability},
  posted-at = {2013-03-21 10:21:33},
  priority = {2}
}

@BOOK{Hen81,
  title = {Geometric Theory of Semilinear Parabolic Equations},
  publisher = {Springer},
  year = {1981},
  author = {Henry, Daniel},
  address = {Berlin},
  citeulike-article-id = {12147834},
  keywords = {book, *file-import-13-03-13},
  posted-at = {2013-03-13 13:35:01},
  priority = {2}
}

@ARTICLE{JPP14,
  author = {Jacob, Birgit and Partington, Jonathan R. and Pott, Sandra},
  title = {Applications of {L}aplace--{C}arleson Embeddings to Admissibility and
	Controllability},
  journal = {SIAM Journal on Control and Optimization},
  year = {2014},
  volume = {52},
  pages = {1299--1313},
  number = {2},
  citeulike-article-id = {13518757},
  citeulike-linkout-0 = {http://dx.doi.org/10.1137/120894750},
  doi = {10.1137/120894750},
  issn = {0363-0129},
  keywords = {admissible\_operators, controllability, infinite\_dimensional\_systems,
	semigroup\_theory},
  posted-at = {2015-02-16 22:04:04},
  priority = {2},
  url = {http://dx.doi.org/10.1137/120894750}
}

@BOOK{JaZ12,
  title = {{Linear Port-Hamiltonian Systems on Infinite-Dimensional Spaces}},
  publisher = {Springer},
  year = {2012},
  author = {Jacob, Birgit and Zwart, Hans J.},
  address = {Basel},
  citeulike-article-id = {12147685},
  citeulike-linkout-0 = {http://dx.doi.org/10.1007/978-3-0348-0399-1},
  isbn = {978-3-0348-0398-4},
  keywords = {book, infinite\_dimensional\_systems, linear\_systems, port-hamiltonian\_system,
	stability},
  posted-at = {2013-03-13 13:08:38},
  priority = {2}
}

@ARTICLE{JLR08,
  author = {Jayawardhana, B. and Logemann, H. and Ryan, E. P.},
  title = {{Infinite-dimensional feedback systems: the circle criterion and
	input-to-state stability}},
  journal = {Communications in Information and Systems},
  year = {2008},
  volume = {8},
  pages = {413--444},
  number = {4},
  citeulike-article-id = {10164184},
  keywords = {circle\_criterion, infinite\_dimensional\_systems, iss},
  posted-at = {2011-12-24 15:49:48},
  priority = {4}
}

@ARTICLE{JTP94,
  author = {Jiang, Zhong-Ping and Teel, A. R. and Praly, L.},
  title = {Small-gain theorem for {ISS} systems and applications},
  journal = {Mathematics of Control, Signals, and Systems},
  year = {1994},
  volume = {7},
  pages = {95--120},
  number = {2},
  abstract = {{We introduce a concept of input-to-output practical stability (IOpS)
	which is a natural generalization of input-to-state stability proposed
	by Sontag. It allows us to establish two important results. The first
	one states that the general interconnection of two IOpS systems is
	again an IOpS system if an appropriate composition of the gain functions
	is smaller than the identity function. The second one shows an example
	of gain function assignment by feedback. As an illustration of the
	interest of these results, we address the problem of global asymptotic
	stabilization via partial-state feedback for linear systems with
	nonlinear, stable dynamic perturbations and for systems which have
	a particular disturbed recurrent structure.}},
  citeulike-article-id = {9648792},
  citeulike-linkout-0 = {http://dx.doi.org/10.1007/bf01211469},
  citeulike-linkout-1 = {http://www.springerlink.com/content/l7277hp7k4j24518},
  issn = {0932-4194},
  keywords = {ios, iss, lyapunov\_methods, small-gain\_theorems},
  posted-at = {2011-08-12 09:25:46},
  priority = {2},
  publisher = {Springer London}
}

@article{KaK16b,
  title={{ISS} with respect to boundary disturbances for 1-{D} parabolic {PDE}s},
  author={Karafyllis, Iasson and Krstic, Miroslav},
  journal = {IEEE Transactions on Automatic Control},
  volume = {61},
  pages = {3712--3724},
  number = {12},
  year={2016},
  publisher={IEEE}
}

@ARTICLE{KSW01,
  author = {Krichman, Mikhail and Sontag, Eduardo D. and Wang, Yuan},
  title = {Input-output-to-state stability},
  journal = {SIAM Journal on Control and Optimization},
  year = {2001},
  volume = {39},
  pages = {1874--1928},
  number = {6},
  abstract = {{This work explores Lyapunov characterizations of the input-output-to-state
	stability (OSS) property for nonlinear systems. The notion of IOSS
	is a natural generalization of the standard zero-detectability property
	used in the linear case. The main contribution of this work is to
	establish a complete equivalence between the input-output-to-state
	stability property and the existence of a certain type of smooth
	Lyapunov function. As corollaries, one shows the existence of "norm-estimators",
	and obtains characterizations of nonlinear detectability in terms
	of relative stability and of finite-energy estimates.}},
  citeulike-article-id = {12375156},
  citeulike-linkout-0 = {http://citeseerx.ist.psu.edu/viewdoc/summary?doi=10.1.1.15.7192},
  keywords = {ioss, iss, iss\_characterization, lyapunov\_methods},
  posted-at = {2013-05-28 09:51:46},
  priority = {2}
}

@ARTICLE{Son89,
  author = {Sontag, E. D.},
  title = {Smooth stabilization implies coprime factorization},
  journal = {IEEE Transactions on Automatic Control},
  year = {1989},
  volume = {34},
  pages = {435--443},
  number = {4},
  abstract = {{It is shown that coprime right factorizations exist for the input-to-state
	mapping of a continuous-time nonlinear system provided that the smooth
	feedback stabilization problem is solvable for this system. It follows
	that feedback linearizable systems admit such fabrications. In order
	to establish the result, a Lyapunov-theoretic definition is proposed
	for bounded-input-bounded-output stability. The notion of stability
	studied in the state-space nonlinear control literature is related
	to a notion of stability under bounded control perturbations analogous
	to those studied in operator-theoretic approaches to systems; in
	particular it is proved that smooth stabilization implies smooth
	input-to-state stabilization}},
  citeulike-article-id = {8484687},
  citeulike-linkout-0 = {http://dx.doi.org/10.1109/9.28018},
  citeulike-linkout-1 = {http://ieeexplore.ieee.org/xpls/abs\_all.jsp?arnumber=28018},
  issn = {00189286},
  keywords = {iss, stabilization},
  posted-at = {2010-12-25 23:40:34},
  priority = {4}
}

@ARTICLE{SoW95,
  author = {Sontag, E. D. and Wang, Y.},
  title = {{On characterizations of the input-to-state stability property}},
  journal = {Systems \& Control Letters},
  year = {1995},
  volume = {24},
  pages = {351--359},
  number = {5},
  abstract = {{We show that the well-known Lyapunov sufficient condition for ��?
	input-to-state stability��? (ISS) is also necessary, settling positively
	an open question raised by several authors during the past few years.
	Additional characterizations of the ISS property, including one in
	terms of nonlinear stability margins, are also provided.}},
  citeulike-article-id = {9052920},
  citeulike-linkout-0 = {http://dx.doi.org/10.1016/0167-6911(94)00050-6},
  issn = {01676911},
  keywords = {control\_systems, dynamical\_systems, iss},
  posted-at = {2011-03-24 09:42:57},
  priority = {2}
}

@BOOK{TuW09,
  title = {{Observation and Control for Operator Semigroups}},
  publisher = {Birkh{\"{a}}user Basel},
  year = {2009},
  author = {Tucsnak, M. and Weiss, G.},
  citeulike-article-id = {12633874},
  citeulike-linkout-0 = {http://books.google.de/books?id=im\_mOwmZJk0C},
  keywords = {book},
  posted-at = {2013-09-18 16:46:39},
  priority = {2},
  url = {http://books.google.de/books?id=im\_mOwmZJk0C}
}

@ARTICLE{Wei89b,
  author = {Weiss, George},
  title = {Admissibility of unbounded control operators},
  journal = {SIAM Journal on Control and Optimization},
  year = {1989},
  volume = {27},
  pages = {527--545},
  number = {3},
  citeulike-article-id = {13553944},
  citeulike-linkout-0 = {http://dx.doi.org/10.1137/0327028},
  doi = {10.1137/0327028},
  issn = {0363-0129},
  keywords = {admissible\_operators, linear\_systems, semigroup\_theory},
  posted-at = {2015-03-18 20:53:38},
  priority = {2}
}

@book{LasieckaTriggianiBook91,
  title={Differential and Algebraic {R}iccati Equations with Application to Poundary/Point Control Croblems: {C}ontinuous Theory and Approximation Theory},
  author={Lasiecka, Irena and Triggiani, Roberto},
  year={1991},
  publisher={Springer}
}

@article{arora2024,
  title={Admissible operators for sun-dual semigroups},
  author={Arora, Sahiba and Schwenninger, Felix L},
  journal={Mathematics of Control, Signals, and Systems},
  volume={38},
  number={1},
  pages={103--125},
  year={2026},
  publisher={Springer}
}

@book{Mir23,
  title={Input-to-State Stability: Theory and Applications},
  author={Mironchenko, A.},
  isbn={9783031146749},
  year={2023},
  publisher={Springer International Publishing}
}

@InProceedings{Preussler23,
author="Preu{\ss}ler, Philip
and Schwenninger, Felix L.",
editor="Schwenninger, Felix L.
and Waurick, Marcus",
title="On Checking ${L}^p$-Admissibility for Parabolic Control Systems",
booktitle="Systems Theory and PDEs",
year="2024",
publisher="Springer Nature Switzerland",
address="Cham",
pages="219--256",
abstract="In this chapter we discuss the difficulty of verifying Lp{\$}{\$}{\backslash}mathrm {\{}L{\}}^p{\$}{\$}-admissibility for p≠2{\$}{\$}p{\backslash}neq 2{\$}{\$}---which even manifests in the presence of a self-adjoint semigroup generator on a Hilbert space---and survey tests for Lp{\$}{\$}{\backslash}mathrm {\{}L{\}}^p{\$}{\$}-admissibility of given control operators. These tests are obtained by virtue of either mapping properties of boundary trace operators, yielding a characterization of admissibility via abstract interpolation spaces, or through Laplace--Carleson embeddings, slightly extending results from Jacob, Partington, and Pott [31] to a class of systems which are not necessarily diagonal with respect to sequence spaces. Special focus is laid on illustrating the theory by means of examples based on the heat equation on various domains.",
isbn="978-3-031-64991-2"
}

@article{LeMerdy98,
	Author = {LeMerdy, Christian},
	Date-Added = {2023-03-17 20:31:18 +0100},
	Date-Modified = {2023-03-17 20:31:27 +0100},
	Doi = {10.1007/pl00005942},
	Issn = {0037-1912},
	Journal = {Semigroup Forum},
	Number = {2},
	Pages = {205--224},
	Publisher = {Springer Science and Business Media LLC},
	Title = {The Similarity Problem for Bounded Analytic Semigroups on {H}ilbert Space},
	Url = {http://dx.doi.org/10.1007/PL00005942},
	Volume = {56},
	Year = {1998},
	Bdsk-Url-1 = {http://dx.doi.org/10.1007/PL00005942},
	Bdsk-Url-2 = {http://dx.doi.org/10.1007/pl00005942}}

@article{JMP20,
	Author = {Jacob, Birgit and Mironchenko, Andrii and Partington, Jonathan R and Wirth, Fabian},
	Citeulike-Linkout-0 = {http://arxiv.org/abs/1911.01327},
	Citeulike-Linkout-1 = {http://arxiv.org/abs/1911.01327},
	Journal = {SIAM Journal on Control and Optimization},
	Number = {5},
	Pages = {2952--2978},
	Publisher = {SIAM},
	Title = {Noncoercive {L}yapunov functions for input-to-state stability of infinite-dimensional systems},
	Volume = {58},
	Year = {2020}}

@article{MiW19a,
	Author = {Andrii Mironchenko and Fabian Wirth},
	Citeulike-Linkout-0 = {http://mironchenko.com/__My_php_sources/Papers/MiW17a.pdf},
	Citeulike-Linkout-1 = {https://doi.org/10.1016/j.jde.2018.11.026},
	Issue = {11},
	Journal = {Journal of Differential Equations},
	Pages = {7038--7072},
	Title = {{Non-coercive Lyapunov functions for infinite-dimensional systems}},
	Volume = {105},
	Year = {2019}}

@article{MiW18b,
	Author = {Andrii Mironchenko and Fabian Wirth},
	Citeulike-Linkout-0 = {http://mironchenko.com/__My_php_sources/Papers/MiW17b.pdf},
	Citeulike-Linkout-1 = {https://ieeexplore.ieee.org/document/8049400},
	Issn = {0018-9286},
	Journal = {IEEE Transactions on Automatic Control},
	Keywords = {Asymptotic stability;Control systems;Differential equations;Lyapunov methods;Stability criteria;Thermal stability;Infinite-dimensional systems;input-to-state stability (ISS);nonlinear systems},
	Number = {6},
	Pages = {1602-1617},
	Title = {Characterizations of Input-to-State Stability for Infinite-Dimensional Systems},
	Volume = {63},
	Year = {2018}}

@article{MiW17c,
	Author = {Mironchenko, A. and Wirth, F.},
	Citeulike-Linkout-0 = {http://mironchenko.com/__My_php_sources/Papers/MiW17c.pdf},
	Citeulike-Linkout-1 = {http://www.sciencedirect.com/science/article/pii/S0167691118301312},
	Journal = {Systems \& Control Letters},
	Pages = {64--70},
	Title = {{Lyapunov characterization of input-to-state stability for semilinear control systems over Banach spaces}},
	Volume = {119},
	Year = {2018}}

@article{MKK19,
	Author = {Mironchenko, Andrii and Karafyllis, Iasson and Krstic, Miroslav},
	Citeulike-Linkout-0 = {http://mironchenko.com/__My_php_sources/Papers/MKK17.pdf},
	Citeulike-Linkout-1 = {http://doi.org/10.1137/17M1161877},
	Journal = {SIAM Journal on Control and Optimization},
	Number = {1},
	Pages = {510--532},
	Title = {Monotonicity Methods for Input-to-State Stability of Nonlinear Parabolic {PDE}s with Boundary Disturbances},
	Volume = {57},
	Year = {2019}}

@article{MiP20,
	Author = {Andrii Mironchenko and Christophe Prieur},
	Citeulike-Linkout-0 = {http://arxiv.org/abs/1910.01714},
	Citeulike-Linkout-1 = {https://doi.org/10.1137/19M1291248},
	Journal = {SIAM Review},
	Number = {3},
	Pages = {529-614},
	Title = {Input-to-state stability of infinite-dimensional systems: recent results and open questions},
	Volume = {62},
	Year = {2020}}

@inproceedings{Mir20,
	Abstract = {In this paper, we extend the ISS Lyapunov methodology to make it suitable for the analysis of ISS w.r.t. inputs from Lp-spaces. We show that the existence of a so-called Lp-ISS Lyapunov function implies Lp-ISS of a system. Also, we show that existence of a noncoercive Lp-ISS Lyapunov function implies Lp-ISS of a control system provided the flow map is continuous w.r.t. states and inputs and provided the finite-time reachability sets, corresponding to the input space Lp are bounded.},
	Author = {Andrii Mironchenko},
	Booktitle = {IFAC-PapersOnLine},
	Citeulike-Linkout-0 = {http://mironchenko.com/__My_php_sources/Papers/Mir20.pdf},
	Citeulike-Linkout-1 = {https://doi.org/10.1016/j.ifacol.2020.12.1222},
	Issn = {2405-8963},
	Keywords = {Nonlinear systems, infinite-dimensional systems, input-to-state stability, Lyapunov methods},
	Number = {2},
	Pages = {5336-5341},
	Title = {Lyapunov functions for input-to-state stability of infinite-dimensional systems with integrable inputs},
	Volume = {53},
	Year = {2020}}



\end{document}